\def\ch{ \marginpar{change} }
\newcommand{\Hmm}[1]{\leavevmode{\marginpar{\tiny%
$\hbox to 0mm{\hspace*{-0.5mm}$\leftarrow$\hss}%
\vcenter{\vrule depth 0.1mm height 0.1mm width \the\marginparwidth}%
\hbox to 0mm{\hss$\rightarrow$\hspace*{-0.5mm}}$\\\relax\raggedright #1}}}
\newcommand{\be}{\begin{eqnarray}}
\newcommand{\ee}{\end{eqnarray}}
\newcommand{\bn}{\begin{eqnarray*}}       
\newcommand{\en}{\end{eqnarray*}}
\newtheorem{theorem}{Theorem}
\newtheorem{definition}{Definition}
\newtheorem{lemma}{Lemma}
\newtheorem{remark}{Remark}
\newcommand{\bea}{\begin{eqnarray*}}
\newcommand{\eea}{\end{eqnarray*}}
\newcommand{\ben}{\begin{eqnarray}}
\newcommand{\een}{\end{eqnarray}}
\newcommand{\beq}{\begin{equation}}
\newcommand{\eeq}{\end{equation}}
\newcommand{\R}{\ensuremath{\mathbb{R}}}
\newcommand{\la}{\lambda}
\newcommand{\half}{\frac{1}{2}}
\newcommand{\eps}{\varepsilon}
\newcommand{\de}{\delta}
\renewcommand{\ch}{\mathbf{1}}
\newcommand{\om}{\omega}
\newcommand{\ka}{\kappa}
\begin{document}

\title{Singular solutions with vorticity control for a nonlocal system of evolution equations}

\date{\today}


\author{Vu Hoang         \and Maria Radosz}

\address{Rice University, 
Department of Mathematics-MS 136, Box 1892, Houston, TX 77251-1892
}
\email{Vu.Hoang@rice.edu}
\email{maria\_radosz@hotmail.com}


\begin{abstract}
We investigate a system of nonlocal transport equations in one spatial dimension. The system can be regarded as a model for the 3D Euler equations in the hyperbolic flow scenario.
We construct blowup solutions with control up to the blowup time.
\end{abstract}

\maketitle

\section{Introduction}
In fluid mechanics, one-dimensional model problems that capture various aspects of the three-dimensional Euler equations have a long-standing history. Recently, various one-dimensional equations \cite{HouLuo2,sixAuthors,CKY} have been proposed that are models for 3D Euler axisymmetric flow and are connected to the hyperbolic flow scenario proposed by T. Hou and G. Luo in \cite{HouLuo1}. 

In this paper, we study the following two regularized version of one-dimensional system proposed in \cite{HouLuo2}: the equations are given by
\beq\label{model}
\begin{split}
\om_t + u \om_x = \frac{\rho}{x},\\
\rho_t + u \rho_x = 0.
\end{split}
\eeq
The velocity field is given by
\begin{align}
u(x, t) = - x \int_x^\infty \frac{\om(y, t)}{y}~dy.
\label{u2}
\end{align}
Our model \eqref{model} is posed on the half-line $\R^+$ and can be regarded as a simpler version of the equations proposed in \cite{CKY}. We will always consider smooth initial data $\om_0, \rho_0$ that with compact suport in $(0, \infty)$. Smooth solutions exist for short times, but may blow up in finite time.

To show finite-time blowup is not hard, and we do not consider this to be the main problem. Instead, we are motivated by a much deeper and more important question:\noindent 
\begin{center}
\emph{ What is the intrinsic blowup mechanism and how does finite time blowup happen?} 
\end{center}
In particular, how does the vorticity profile behave close to the blowup time? Due to the nonlocal nature of the velocity field, this is a challenging question. 

Similar questions were asked for an equation proposed by C\'ordoba-C\'ordoba-Fontelos \cite{CCF} in \cite{silvestre2014transport}, where it was conjectured that generically cusps form in the solution.

In this paper, we develop a systematic approach to constructing blowup solutions with control over the vorticity up to the time of blowup. Moreover, we make statements about the asymptotic behavior of the vorticity close to the blowup time. The significance of system \eqref{model} is that it allows us to explore the essential issues in a more manageable context.  

As a basic observation note that \eqref{model} has a family of explicit singular solutions on $(0, \infty)$:
\beq\label{inf}
\om(x, t) = k x^{-\frac{1}{2}}, ~~\rho(x, t) = k^2
\eeq
with $k > 0$. 
This is seen by computing the velocity field using \eqref{u2}, yielding
\bea
u(x, t) &= - k x^{\frac{1}{2}}.
\eea

It is straightforward to verify that \eqref{inf} solves
\eqref{model} on $(0, \infty)$. The starting point for our work comes from the following natural question: What happens if we start with initial data $\om_0$ that looks qualitatively like in Figure \ref{fig1}? One can imagine modifying the stationary singular profile $x^{-1/2}$ in a suitable way as to make it compactly supported in $(0, \infty)$, making a ``cut-off" for small $A_0 > 0$ and for large $x$. 
$\rho_0$ will also be adjusted in a suitable way so that $\rho_0$ has compact support in $(0, \infty)$.

Our main results show that all such initial vorticity configurations $\om_0$ lead to finite-time blowup. The precise conditions on $\om_0, \rho_0$ are given below.
Moreover, if $\rho_0$ is appropriately chosen, we will obtain bounds that indicate the asymptotic behavior $\om \sim x^{-1/2}$ at the blowup time. 

We close the introduction by remarking that the exponent in the singular profile \eqref{inf} is model-dependent. In \cite{HouLuo2}, a 1D model of the 3D axisymmetric Euler equations was proposed:  
\beq\label{model_HL}
\begin{split}
\om_t + u \om_x = \rho_x,\\
\rho_t + u \rho_x = 0,
\end{split}
\eeq
where the velocity is given by $u_x = H \om$, $H$ being the Hilbert transform. The analogue of \eqref{inf} for \eqref{model_HL} is
\begin{align}\label{inf2}
\begin{split}
\om(x, t) = k |x|^{-1/3}\operatorname{sgn}(x), \\
\rho(x, t) = c_1 k^2 |x|^{1/3} + c_2 k^3 t
\end{split}
\end{align}
where $c_1, c_2>0$ are suitable constants.  
We plan to explore the connection between \eqref{inf2} and finite-time singularities in future work.

\begin{figure}
\includegraphics[scale=0.8]{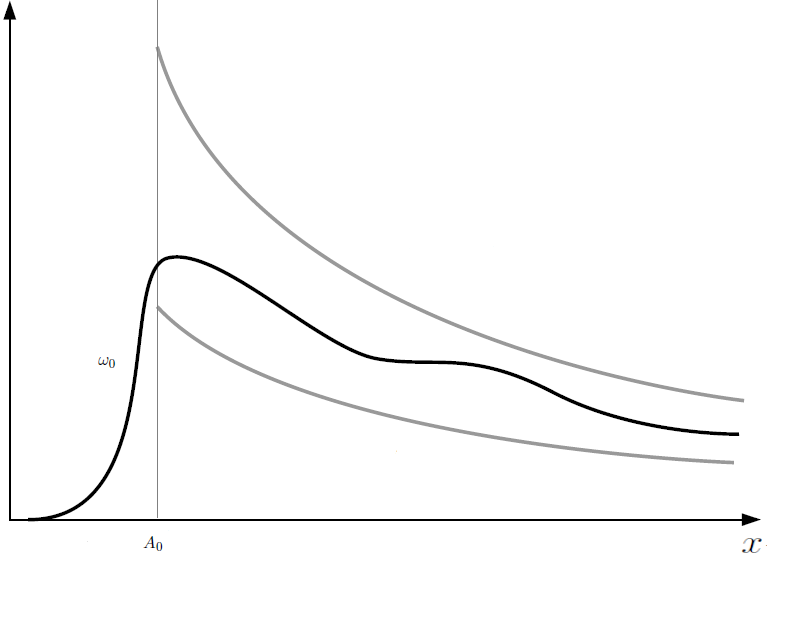}
\caption{Smooth initial vorticity $\om_0$ with compact support in $(0,\infty)$. Large $x$ not shown. } \label{fig1}
\end{figure}

\section{Main results}
We first make a few general remarks about our problem setup. 

\begin{definition}\label{soln2}
Let $(\om_0, \rho_0)\in C_0^1(0, \infty)\times C_0^1(0, \infty)$ be given functions. A pair of
functions $(\om, \rho)$ satisfying
\begin{align}
\begin{split}
\om\in C([0, T], C_0^1(0, \infty))\cap C^1([0, T], C(0, \infty)),\\
\rho \in C^1([0, T], C_0^1(0, \infty))\cap C^1([0, T], C(0, \infty))
\end{split}
\end{align}
is solution of \eqref{model}+\eqref{u2} if \eqref{model} holds in the
classical sense on $[0, T]\times(0, \infty)$.
\end{definition}

\begin{theorem}\label{t3}
Let initial data $(\om_0, \rho_0)\in C_0^1(0, \infty)\times C_0^1(0, \infty)$ be given. Then there exists a $T>0$ and
a unique pair of functions $(\om, \rho)$
solving \eqref{model1}+\eqref{u3} with $\om(\cdot, 0)=\om_0(\cdot), \rho(\cdot, 0) = \rho_0(\cdot)$. Moreover, for nonnegative initial data, the following holds: If there exists a $\de>0$ such that
\begin{align}
\limsup_{t\to T} \int_0^t \|\om(\cdot, s)\|_{L^\infty(0, \de)}~ds < \infty,
\end{align}
then the solution can be continued past $t = T$.
\end{theorem}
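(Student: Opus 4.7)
For local existence and uniqueness I would use the Lagrangian/characteristic method. The flow $\Phi(x,t)$ solves $\partial_t\Phi=u(\Phi,t)$ with $\Phi(x,0)=x$; since $u(x,t)=-x\int_x^\infty \omega(y,t)/y\,dy$ vanishes linearly at $x=0$ for compactly supported $\omega$, characteristics starting in $(0,\infty)$ stay there. Along the flow,
\[
\rho(\Phi(x,t),t)=\rho_0(x),\qquad \omega(\Phi(x,t),t)=\omega_0(x)+\int_0^t \frac{\rho_0(x)}{\Phi(x,s)}\,ds.
\]
I would run a short-time Picard iteration for $(\Phi,\omega)$ in a Banach space of $C^1$ flows that keep $\Phi(\cdot,t)$ bounded away from zero on $\operatorname{supp}\omega_0\cup\operatorname{supp}\rho_0$. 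Contraction gives existence; uniqueness follows from a Gronwall estimate on the difference of two solutions.

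For the continuation criterion, the critical quantity is
\[
\eta(t):=\min\{\Phi(x,t):x\in\operatorname{supp}\omega_0\cup\operatorname{supp}\rho_0\}.
\]
For nonnegative data, $\rho\ge 0$ is preserved by transport and then $\omega\ge 0$ follows along characteristics, so $u\le 0$; hence characteristics are nonincreasing in $t$ and the supports of $\omega,\rho$ remain in $[\eta(t),b_0]$, where $b_0=\max(\operatorname{supp}\omega_0\cup\operatorname{supp}\rho_0)$. A crucial observation is that for any $y>\delta$, monotonicity gives $\Phi(x,s)\ge\Phi(x,t)=y>\delta$ for all $s\le t$ whenever $\Phi(x,t)=y$, so
\[
\|\omega(\cdot,t)\|_{L^\infty(\delta,\infty)}\le\|\omega_0\|_{L^\infty}+\frac{t\,\|\rho_0\|_{L^\infty}}{\delta}
\]
is a priori bounded on $[0,T]$ without any information about $\eta$.

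Differentiating $\log\eta$ and splitting the resulting integral at $\delta$ gives, with $L(t):=-\log\eta(t)$,
\[
\frac{dL}{dt}\le \|\omega(\cdot,t)\|_{L^\infty(0,\delta)}\bigl(L(t)+\log\delta\bigr)+C(t),
\]
where $C(t)\le \delta^{-1}(b_0-\delta)\|\omega(\cdot,t)\|_{L^\infty(\delta,\infty)}$ is already controlled. Gronwall combined with the hypothesis $\int_0^T \|\omega(\cdot,s)\|_{L^\infty(0,\delta)}\,ds<\infty$ then produces $L(t)\le L_{\max}$ on $[0,T)$, i.e.\ a uniform lower bound $\eta(t)\ge e^{-L_{\max}}>0$. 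With $\eta$ bounded below, the characteristic formula gives $\|\omega\|_{L^\infty}$ bounded, and differentiating the transport equations in $x$ and integrating along the flow produces matching bounds on $\|\omega_x\|_{L^\infty}$ and $\|\rho_x\|_{L^\infty}$. These uniform $C_0^1$ bounds let us invoke the local existence result at $t=T$ to continue the solution past $T$.

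The main obstacle is the apparent coupling between $\eta$ and $\omega$ in the Gronwall scheme. The observation that decouples it is the monotonicity remark: because $u\le 0$, characteristics ending to the right of $\delta$ never crossed to the left of $\delta$, so $\omega$ is automatically tame there. This reduces the Gronwall loop to a genuinely closed one in which $\|\omega(\cdot,t)\|_{L^\infty(0,\delta)}$ is the only quantity needing external control, exactly matching the theorem's hypothesis; the logarithmic appearance of $\log(\delta/\eta)$ rather than a negative power of $\eta$ is what allows the time-integrated $L^\infty_x(0,\delta)$ norm to suffice.
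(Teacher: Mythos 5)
The paper does not actually give a proof of Theorem~\ref{t3}; it says only that the argument uses the standard particle-trajectory method and is close to \cite{Hoang2D}. Your proposal is a correct implementation of exactly that method: local existence and uniqueness by Picard iteration on the flow map in a class that keeps $\Phi$ bounded away from $0$ on the supports, and the continuation criterion by a log-Gronwall estimate on the leftmost support point $\eta(t)$. The decoupling observation you highlight is indeed the crux: nonnegativity of $\om_0,\rho_0$ propagates, hence $u\le 0$, hence a particle sitting at $y>\delta$ at time $t$ was at positions $\ge\delta$ for all earlier times, so $\|\om(\cdot,t)\|_{L^\infty(\delta,\infty)}$ is controlled a priori by the transport formula without any information about $\eta$. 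Combined with $\dot\eta=-\eta Q(\eta,t)$ and the split of $Q(\eta,t)$ at $\delta$, this reduces the continuation question to the hypothesis $\int_0^T\|\om(\cdot,s)\|_{L^\infty(0,\delta)}\,ds<\infty$, and the logarithmic (rather than power-law) dependence on $\eta$ in $Q$ is exactly what makes the time-integrated $L^\infty$ norm a sufficient quantity. Once $\eta$ has a positive lower bound, the $L^\infty$ and then $C^1$ bounds on $(\om,\rho)$ follow along characteristics as you indicate, allowing restart of the local theory. One minor correction: the theorem concerns the general $\beta$-system \eqref{model1}, so the Lagrangian formula should read $\om(\Phi(x,t),t)=\om_0(x)+\rho_0(x)\int_0^t\Phi(x,s)^{-\beta}\,ds$, and the a priori bound on $(\delta,\infty)$ should have $\delta^{-\beta}$ rather than $\delta^{-1}$; this does not change the structure of the argument.
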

The proof of Theorem \ref{t3} uses the standard particle-trajectory method for local existence and is very similar to results in \cite{Hoang2D}, hence the proof will be omitted. 

For the following controlled blowup Theorem, we consider a more
general system than \eqref{model}:
\beq\label{model1}
\begin{split}
\om_t + u \om_x = \frac{\rho}{x^\beta},\\
\rho_t + u \rho_x = 0,
\end{split}
\eeq
with velocity field given by
\begin{align}
u(x, t) = - x \int_x^\infty \frac{\om(y, t)}{y}~dy.
\label{u3}
\end{align}
The parameter $\beta$ is an arbritrary, positive number $\beta > 0$.

\begin{theorem}\label{t4}
There exist constants $A_0 > 0$ and positive $p, q, \phi, \psi$ with
\begin{align}
    0< p < q, ~~p+q = \beta 
\end{align}
and the following properties: There exists a nonempty class of initial data, called suitably prepared initial data in the sense of Definition \ref{def_suitablyPrep1} and characterized by $A_0, p, q, \phi, \psi$ such that for any such initial data $\om_0, \rho_0\in C_0^1(0, \infty)$ the smooth solution $(\om, \rho)$ of \eqref{model}+\eqref{u2}
is defined on $[0, T_s)$ for some finite $T_s>0$ and cannot be continued
as a smooth solution past $T_s$. Moreover,  
\begin{align}\label{vort_blowup}
\lim_{t\to T_s} \int_0^t \|\om(\cdot, s)\|_{L^\infty(0, \infty)}~ds = \infty
\end{align}
holds. For these solutions, we have the following controls 
on the vorticity: 
\beq
 \phi x^{-p} \leq \om(x, t) \leq \psi x^{-q} \quad (t \in [0, T_s))
\eeq
for all $x$ such that $A(t) \le x \le 1$. Additionally, there exists a time $T_1 > 0$ determined by the initial data such that if the solution stays smooth on $[0, T_1)$,
\begin{align}
\lim_{t\to T_1} A(t) = 0
\end{align}
holds. Here, $A(t)$ is the particle trajectory with $A(0) =A_0, \dot A(t) = u(A(t), t)$.
\end{theorem}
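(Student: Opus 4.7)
The approach is an invariant-region (bootstrap) argument for the power-law barriers $\phi x^{-p}$ and $\psi x^{-q}$. I fix exponents $0 < p < \beta/2 < q$ with the algebraic identity $p+q = \beta$; this identity is what makes the barriers compatible with the source $\rho/x^\beta$ once one integrates along characteristics. A \emph{suitably prepared initial datum} will be a smooth, nonnegative, compactly supported pair $(\om_0, \rho_0)$ with $\operatorname{supp}(\om_0),\operatorname{supp}(\rho_0)\subset [A_0, M]$ satisfying $\phi x^{-p} \le \om_0(x) \le \psi x^{-q}$ on $[A_0, 1]$, together with a two-sided bound $c_1 \le \rho_0(x) \le c_2$ to be determined below. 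The existence of such data is clear by smoothly truncating the stationary profile $x^{-\beta/2}$.

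Now suppose the barrier bounds hold on $[A(t), 1]$ for $t \in [0, T]$. Splitting $u(x,t) = -x\int_x^1 \om(y,t)/y\, dy - x\int_1^\infty \om(y,t)/y\, dy$ and inserting the barriers into the first integral yields the leading-order estimate
\[
\frac{\phi}{p}\, x^{1-p}\bigl(1 - o(1)\bigr) \le |u(x,t)| \le \frac{\psi}{q}\, x^{1-q}\bigl(1 + o(1)\bigr), \qquad x \in [A(t), 1].
\]
The outer tail is $O(x)$ uniformly on $[0, T_1]$: on $[1, M]$ the source $\rho/x^\beta$ is bounded by $\|\rho_0\|_\infty$, so $\om(y, t) \le \|\om_0\|_\infty + t\|\rho_0\|_\infty$ there, and for $A_0$ small this tail is strictly subleading to $x^{1-q}$ throughout $[A(t), 1]$.

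To close the bootstrap, I trace the backward characteristic $X(s)$ through $(x, t)$ with $X(0) = x_0 \ge x$; since $\rho$ is transported,
\[
\om(x, t) = \om_0(x_0) + \rho_0(x_0) \int_0^t X(s)^{-\beta}\,ds = \om_0(x_0) + \rho_0(x_0)\int_x^{x_0} \frac{dX}{|u(X, \tau(X))|\,X^{\beta}}.
\]
The identity $p+q = \beta$ converts the velocity bounds into integrands proportional to $X^{-p-1}$ and $X^{-q-1}$, producing
\[
\phi x_0^{-p} + \tfrac{q\rho_0(x_0)}{p\psi}\bigl(x^{-p} - x_0^{-p}\bigr) \le \om(x, t) \le \psi x_0^{-q} + \tfrac{p\rho_0(x_0)}{q\phi}\bigl(x^{-q} - x_0^{-q}\bigr).
\]
Requiring $\tfrac{p\phi\psi}{q} \le \rho_0(x) \le \tfrac{q\phi\psi}{p}$ (a nonempty interval because $p < q$) forces both inequalities to close strictly to $\phi x^{-p} \le \om(x, t) \le \psi x^{-q}$, extending $T$. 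This mutual compatibility of constants is precisely what Definition~\ref{def_suitablyPrep1} must encode.

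Finally, the lower velocity bound at $A(t)$ gives $(d/dt)A^p \le -\phi$, hence $A(t)^p \le A_0^p - \phi t$ and $A(t) \to 0$ at $T_1 := A_0^p/\phi$. The lower barrier then yields $\|\om(\cdot, t)\|_{L^\infty} \ge \phi A(t)^{-p}$, whence $\int_0^{T_1} \|\om(\cdot, s)\|_{L^\infty}\, ds = +\infty$, and the continuation criterion of Theorem~\ref{t3} forces blowup at some $T_s \le T_1$, yielding \eqref{vort_blowup}. The main obstacle is the velocity estimate: one must control the outer tail uniformly in time so that $x^{1-q}$ strictly dominates the tail's $O(x)$ contribution throughout the evolution, which forces $A_0$ small and $p, q$ close to $\beta/2$; intertwined with this is the delicate two-sided window for $\rho_0$, which must simultaneously close both barrier inequalities without overshooting either.
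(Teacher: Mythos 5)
Your overall strategy is the same as the paper's: an invariant-region (bootstrap) argument with power-law barriers $\phi x^{-p}$ and $\psi x^{-q}$ subject to $p+q=\beta$, a velocity estimate derived by integrating the barriers, and a trapping step in which one integrates along backward characteristics and uses the transport of $\rho$ together with the change of variables $ds = -dX/|u|$ and the identity $p+q=\beta$. Your closure conditions on $\rho_0$ are algebraically the same, up to reparametrization, as the paper's conditions \eqref{cond_params} on $\phi\psi$ and $m$, and the blowup conclusion via $A(t)^p \lesssim A_0^p - c t$ and the continuation criterion of Theorem~\ref{t3} is correct in spirit.

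There is, however, a genuine gap in the velocity estimate that propagates into a wrong choice of parameter regime. You claim that the outer tail $x\int_1^\infty \om(y,t)/y\,dy$ is $O(x)$ and hence ``strictly subleading to $x^{1-q}$ throughout $[A(t),1]$'' once $A_0$ is small. That is false near $x=1$: the ratio $x/x^{1-q}=x^{q}$ is of order $1$ there, and indeed at $x=1$ the inner contribution to $Q$ vanishes while all of $Q(1,t)$ comes from the tail. The paper absorbs the tail into the multiplicative constants
\[
b_0=\frac{\min\{1,p\log 2\}}{p},\qquad b_1=\frac{\max\{1,q\log 4\}}{q},
\]
which are \emph{not} close to $1/p$ and $1/q$, and the closure condition then reads $b_1 p\,\phi\psi < \rho_0 < b_0 q\,\phi\psi$. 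This window is nonempty only when $b_1 p < b_0 q$. Plugging in $p=q=\beta/2$ (take $\beta=1$, so $p=q=1/2$, giving $b_0=\log 2$, $b_1=\log 4$) yields $b_1 p = \log 2 = b_0 q$, so the window collapses to a point and shrinks further once one allows any genuine range for $\rho_0$. Hence your concluding remark that the argument ``forces $p,q$ close to $\beta/2$'' is exactly backwards: the paper's Lemma~\ref{lem6} takes $p$ small and $q$ close to $\beta$, precisely so that $b_1 p \ll b_0 q$ and the two-sided window opens up. Once you replace the $o(1)$ factors by the honest constants $b_0, b_1$ and take $p\to 0$ rather than $p\to\beta/2$, your sketch closes along the paper's lines; you would also need to state and propagate the auxiliary bounds on $\om$ over $[1,4]$ (the paper's conditions on $[1,3]$ and the observation \eqref{add_bound}) to control where particles come from, which your sketch leaves implicit.
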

\begin{remark}
$A_0$ is represented in Figure \ref{fig1}. Suitably prepared initial data in the sense of Definition \ref{def_suitablyPrep1} essentially requires that $\om$ lies above a function of the form $\phi x^{-p}$ and below the function $\psi x^{-q}$ for $x\in [A_0, 1]$, where $A_0$ has to be chosen sufficiently small. 
\end{remark}

The following theorem gives bounds for the vorticity that are precise in the exponent. For simplicity, the following Theorem is stated for solutions of \eqref{model} with velocity field \eqref{u2}, but a 
similar version exists for \eqref{model1}. 
\begin{theorem}\label{main_theorem_2}
There exists a choice of parameters such that the set of suitably prepared initial data in the sense of Definition \ref{def_suitablyPrep2} is not empty. For all such initial data, the lifetime $T_s$ of the corresponding solutions is finite and \eqref{vort_blowup} holds. The estimate
\begin{align}\label{asymptotic_omega}
\phi x^{-1/2} \le \om(x, t) \le \psi x^{-1/2} 
\end{align}
holds for some positive $\phi, \psi, \la_0$ and for all $x\in (A(t), \la_0]$. Additionally, there exists a time $T_1 > 0$ such that if
the solution stays smooth on $[0, T_1)$, we have
\begin{align}
\lim_{t \to T_1} A(t) = 0.
\end{align}
\end{theorem}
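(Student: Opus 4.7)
The plan is to bootstrap the scale-invariant quantity $f(x,t) := x^{1/2}\om(x,t)$, which is exactly constant for the explicit singular steady state \eqref{inf}. Unlike Theorem \ref{t4}, where the strict gap $p<q$ only forces the profile into a widening corridor, the sharper two-sided bound with matching exponent requires an intrinsic mechanism pinning $f$ near a specific value. The decisive observation is that the nonlocal integral in \eqref{u2} is heavily weighted toward its lower endpoint: writing $\int_x^\infty \om(y,t)/y\,dy = \int_x^\infty f(y,t)\,y^{-3/2}\,dy$ reveals that the integral is dominated by the values of $f$ near $y=x$, so the effective velocity law is $u(x,t)\approx -2f(x,t)\,x^{1/2}$. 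This makes the dynamics essentially pointwise, and combined with the conservation of $\rho$ along trajectories, reduces the $\om$-equation on characteristics to a scalar logistic ODE whose stable equilibrium is $\sqrt{\rho}$.

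I would proceed in the following order. First, derive the PDE for $f$ from \eqref{model}:
\beq
f_t + u f_x = \frac{u}{2x}\,f + \frac{\rho}{x^{1/2}}.
\eeq
Second, pass to Lagrangian variables along $\dot X = u(X,t)$ and set $F(t) = f(X(t),t)$, $R_0 := \rho_0(X(0))$ (which equals $\rho(X(t),t)$ by transport), to obtain
\beq
\dot F(t) = \frac{u(X(t),t)}{2X(t)}\,F(t) + \frac{R_0}{X(t)^{1/2}}.
\eeq
Third, set up a continuity argument on the hypothesis $\phi \le f(x,s) \le \psi$ for all $x\in(A(s),\la_0]$ and $s\in[0,t]$, and use it, after splitting \eqref{u2} at $\la_0$, to show $u(x,s) = -2 f(x,s)\,x^{1/2}\bigl(1+\eta(x,s)\bigr)$, where the error $\eta$ arises from the tail $\int_{\la_0}^\infty \om/y\,dy$ (controlled via conservation of $\rho$ along trajectories together with the compact support of $\om_0$) and from the non-pointwise nature of the weighted integral (of order $(x/\la_0)^{1/2}$ plus oscillation of $f$, both small once $A_0/\la_0$ is chosen small). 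Substituting yields the approximate logistic form
\beq
\dot F(t) = \frac{R_0 - F(t)^2}{X(t)^{1/2}}\bigl(1+O(\eta)\bigr).
\eeq

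Fourth, exploit the stable equilibrium $F_\infty = \sqrt{R_0}$: the content of Definition \ref{def_suitablyPrep2} will be that $\phi < x^{1/2}\om_0(x) < \psi$ and $\phi^2 < \rho_0(x) < \psi^2$ with positive safety margins on the relevant range of $x$, so $\sqrt{R_0}\in(\phi,\psi)$ for every trajectory starting in the good region. An invariant-rectangle argument for the scalar ODE, with $\eta$ absorbed by shrinking $\la_0$, makes $[\phi,\psi]$ strictly forward-invariant for $F$, closing the bootstrap and yielding \eqref{asymptotic_omega} on $(A(t),\la_0]$. Finite-time collapse of $A(t)$ then follows since $\dot A = u(A,t) \le -c\,A(t)^{1/2}$, so $A(T_1) = 0$ for some finite $T_1$, and the lower bound $\om\ge \phi x^{-1/2}$ combined with the continuation criterion of Theorem~\ref{t3} forces \eqref{vort_blowup}. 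The main obstacle is controlling $\eta$ uniformly in time: the integral defining $u$ is not truly pointwise, and the tail outside $(0,\la_0]$ need not obey any $x^{-1/2}$ law, so keeping the rectangle strictly invariant across the maximal time interval requires a careful joint tuning of $A_0,\la_0,\phi,\psi$ and the profile of $\rho_0$ --- the precise coordination that distinguishes Definition~\ref{def_suitablyPrep2} from the coarser one used for Theorem~\ref{t4}.
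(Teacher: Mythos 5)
Your scale-invariant quantity $f(x,t)=x^{1/2}\om(x,t)$ and the characteristic ODE
\begin{align*}
\dot F(t) = -\tfrac{1}{2}\,Q(X(t),t)\,F(t) + R_0\,X(t)^{-1/2}
\end{align*}
(writing $u=-xQ$) are both correct, but the reduction to a \emph{logistic} ODE is not. That step requires $Q(x,t)\approx 2 f(x,t)\,x^{-1/2}$, i.e.\ that the integral $Q(x)=\int_x^\infty f(y)\,y^{-3/2}\,dy$ be dominated by $y\approx x$. It is not: for $f\equiv\mathrm{const}$, the block $[2^kx,2^{k+1}x]$ carries a fraction $2^{-k/2}(1-2^{-1/2})$ of the total, so the integral is spread over $O(\log(\la_0/x))$ dyadic scales. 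Hence $Q(X)$ is a nonlocal average of $f$ over a wide range of scales, not $F$ itself, and the Lagrangian ODE is \emph{linear} in $F$, of the form $\dot F = X^{-1/2}\bigl(R_0-c\,F\bigr)$ with a coefficient $c=c(X,t)$ that you only know lies in $[\phi,\psi]$ under the bootstrap hypothesis. With $R_0=1$, forcing $\dot F>0$ at $F=\phi$ requires $1>\psi\phi$, while forcing $\dot F<0$ at $F=\psi$ requires $1<\phi\psi$; these are incompatible, so the rectangle $[\phi,\psi]$ is not forward-invariant for any fixed constants. Shrinking $A_0/\la_0$ does not help: it kills the endpoint and tail errors, but not the oscillation of $f$ over the dyadic scales inside $(A(t),\la_0]$, which is bounded only by $\psi-\phi$ and is precisely the thing you are trying to control. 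This is the circularity your proposal does not break.

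The paper's Section \ref{sec:blowupAsymp} resolves exactly this by replacing the single rectangle with a scale-indexed family of barriers $\phi_n x^{-p_n}\le\om\le\psi_n x^{-q_n}$ on $I_n=[\la_n,\la_{n-1}]$, with $p_n\nearrow\tfrac12$, $q_n\searrow\tfrac12$. The exponent gap $q_n-p_n>0$ supplies the slack that a single fixed rectangle lacks, and Lemma \ref{lem_control_Q} shows that the contribution to $Q(x)$, $x\in I_n$, from the coarser scales $I_1,\dots,I_{n-1}$ is a geometrically small relative error $\mu_n\to 0$, which is what makes the shrinking gap admissible via the trapping condition \eqref{trapping}. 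The relay conditions \eqref{relay1}--\eqref{relay2} patch the barriers continuously across $\la_{n-1}$, and the uniform $x^{-1/2}$ bounds in \eqref{asymptotic_omega} emerge in the limit because $\phi_n,\psi_n$ converge to positive limits with $\phi_\infty\psi_\infty=1$. To salvage your Lagrangian viewpoint you would need scale- or time-dependent bounds playing the role of $\phi_n,\psi_n,p_n,q_n$, at which point you would be reconstructing the paper's multi-scale iteration rather than avoiding it.
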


\begin{remark}
\begin{enumerate}
\item[(a)]
Theorem \ref{main_theorem_2} shows that there exists a class of initial data that blows up in finite time in such a way that $\om(x, t)$ has the same asymptotic behavior as the stationary singular profile of the type given in \eqref{inf} as $t\to T_s$, with the correct exponent $-\frac{1}{2}$. Given any initial profile of the general shape in Figure \eqref{fig1}, there are two possibilities: either the solution blows up before the particle $A(t)$ reaches zero, or at the moment when $A(t)$ reaches zero. In the second case, we will have the estimate
\begin{align}\label{asymptotic_omega_2}
\phi x^{-1/2} \le \lim_{t\to T_1}\om(x, t) \le \psi x^{-1/2} 
\end{align}
for all $x\in (0, \la_0]$.
\item[(b)] In order to obtain Theorem \ref{main_theorem_2}, we have to make more stringent assumptions on the class of initial compared to Theorem \ref{t4}. In Theorem \ref{t4}, we can also allow variations in $\rho_0(x)$, whereas in \ref{main_theorem_2}, we assume that $\rho_0(x) = 1$ identically on an interval.
\end{enumerate}
\end{remark}

The plan of the paper is as follows: In Section \ref{sec:blowupSmooth} we show controlled blowup for the slightly more general model \eqref{model1} with \eqref{u3}, proving bounds for the vorticity that involve inverse power functions with differing exponents. In \ref{sec:blowupAsymp}, we improve the exponents to get the values predicted by the stationary singular profile. 

\section{Controlled blowup for smooth solutions}\label{sec:blowupSmooth}
In this section, we consider smooth solutions for \eqref{model1}. We write $u(x, t) = -x Q(x, t)$ where
\begin{align}\label{def_Q2}
Q(x, t) = \int_x^\infty \frac{\om(y, t)}{y}~dy. 
\end{align}

\begin{definition}\label{def_suitablyPrep1}
Given parameters $p, q, \phi, \psi, \de, m, A_0, \eps > 0$ 
with
\begin{align}
\begin{split}\label{restr_p}
0 < \eps < A_0 < 1, ~~p, ~~q\in (0, 1),\\
p + q = 1+\beta-1, \\
0 < p < \frac{\beta}{2},\\
\phi < \psi-\de,
\end{split}
\end{align}
we say that the initial data $(\om_0, \rho_0)$ is suitably prepared if the following hold
\begin{align}
\begin{split}\label{cond_init}
\om_0, \rho_0~~\text{are nonnegative on}~\R^+,\\
 \operatorname{supp}\om_0 \subset [\eps, 4],~~ \operatorname{supp}\rho_0 \subset [\eps, 3],\\ 
 \om_0(x) < \psi x^{-q} \quad (A_0 \leq x\leq 1)\\
 \om_0(x) < \psi-\delta  \quad (1 \leq x \leq 4)\\
 \phi x^{-p} < \om_0(x) \quad (A_0 \leq x \leq 1)\\
 \phi  < \om_0(x) \quad (1\leq x\leq 3)\\
  \rho_0 \leq m \quad (x\in \R^+)\\
m^{-1} \leq \rho_0(x)\quad (A_0 \leq x \leq 2).
\end{split}
\end{align}
\end{definition}
An important preliminary observation is as follows: Since $\om_0, \rho_0$ are nonnegative, $\om(\cdot, t), \rho(\cdot, t)$ remain nonnegative for all times and as a consequence, $u(x, t)\leq 0$ for all $x\geq 0$ and $t\in [0, T_s)$, where $T_s$ is the lifetime of the solution. In other words, all particles move to the left. The specific conditions on $\om_0, \rho_0$ is are written down in a way that is  convenient for us. In the same way, we can treat  essentially all initial data with compact support in $(0,\infty)$ that has the same general qualitative behavior.  
\begin{definition} 
Given the positive numbers $A_0, \phi, \psi, p, q$  
satisfying \eqref{restr_p}, we say that a smooth solution $(\om, \rho)$ of \eqref{model}+\eqref{u3} is \emph{controlled} on $[0, T)$ if $\om(\cdot,0), \rho(\cdot, 0)$ are suitably prepared with the same
parameter values and if
\begin{align}\label{cc15}
\begin{split}
\om(x, t) &< \psi x^{-q}\quad (x\in [A(t), 1]),\\
\om(x, t) &< \psi \quad (x\in [1, 3]),\\
\phi x^{-p} &<  \om(x, t) \quad (x\in [A(t), 1]),\\
\phi &< \om(x, t)\quad (x\in [1, 2])
\end{split}
\end{align}
hold for all $t\in [0, T)$. Here, $A(t)$ is the particle trajectory that satisfies $A(0) = A_0$.
\end{definition}
Note that $\om(x, t) = 0$ for $x \geq 4$ and all $t\in [0, T_s)$. Another consequence of \eqref{cond_init} and the fact that all particles are moving to the left is that 
\begin{align}\label{add_bound}
\om(x, t) \leq \psi \quad (x\in [3, 4]).
\end{align}
This follows because $\rho(x, t) = 0$ for all $x\geq 3$.

The following Lemma gives an important estimate on $Q$, provided $\om$ is controlled.
\begin{lemma}\label{controlQ1}
Suppose $(\om, \rho)$ is controlled on $[0, T)$. Then
\begin{align}
\begin{split}
b_0 \phi x^{-p} \leq Q(x, t) \leq  b_1 \psi x^{-q}\quad (x\in [A(t), 1]).
\end{split}
\end{align}
Here, $b_0,  b_1$ are given by
\begin{align}\label{def_b}
b_0 = \frac{\min\{1, p\log(2)\}}{p}, \quad
b_1 = \frac{\max\{1, q\log(4)\}}{q}. 
\end{align}
Moreover, we have the estimate
\begin{align}\label{add_up_Q}
Q(x, t) \leq \psi \log\left(\frac{4}{x}\right) 
\end{align}
for all $x\geq 1$.
\end{lemma}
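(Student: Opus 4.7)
The strategy is straightforward splitting of the integral defining $Q$ according to the regions on which the hypothesis of ``controlled'' gives us pointwise bounds for $\om$. Since $\om_0$ is supported in $[\eps, 4]$ and all particles move left, $\om(\cdot, t)$ is also supported in $[0, 4]$, so I would write
\begin{align*}
Q(x,t) = \int_x^1 \frac{\om(y,t)}{y}\,dy + \int_1^3 \frac{\om(y,t)}{y}\,dy + \int_3^4 \frac{\om(y,t)}{y}\,dy
\end{align*}
for $x\in [A(t),1]$, and then insert the four pointwise bounds from \eqref{cc15} together with the observation \eqref{add_bound} that $\om\leq \psi$ on $[3,4]$.

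For the upper estimate, the first integral is bounded by $\psi \int_x^1 y^{-q-1}\,dy = \frac{\psi}{q}(x^{-q}-1)$, and the remaining two are bounded by $\psi\log 3 + \psi\log(4/3) = \psi\log 4$. Summing gives $Q(x,t)\leq \frac{\psi}{q}(x^{-q}-1+q\log 4)$. To reconcile this with $b_1\psi x^{-q}$ with $b_1 = \max\{1,q\log 4\}/q$, I would check the two cases: if $q\log 4\leq 1$ the term $q\log 4 - 1$ is nonpositive so $\frac{\psi}{q}(x^{-q}-1+q\log 4)\leq \frac{\psi}{q}x^{-q}$; if $q\log 4\geq 1$ I rewrite the desired inequality as $(q\log 4 - 1)(x^{-q}-1)\geq 0$, which is immediate since $x\leq 1$. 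The lower estimate is entirely symmetric: the integrals over $[x,1]$ and $[1,2]$ give $Q(x,t)\geq \frac{\phi}{p}(x^{-p}-1) + \phi\log 2 = \frac{\phi}{p}(x^{-p}-1+p\log 2)$, and the same two-case argument (splitting at whether $p\log 2 \geq 1$) converts this into $b_0\phi x^{-p}$.

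For the bound \eqref{add_up_Q} valid on $x\geq 1$, I would simply combine the middle two bounds $\om\leq \psi$ on $[1,3]$ and $\om\leq \psi$ on $[3,4]$ from \eqref{cc15} and \eqref{add_bound} to get $Q(x,t)\leq \psi\int_x^4 \frac{dy}{y} = \psi\log(4/x)$.

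I do not see any real obstacle here; the only point that requires attention is keeping track of the constants so that the max/min structure defining $b_0,b_1$ in \eqref{def_b} falls out cleanly, which is what the two-case analysis above is designed to handle. The rest is elementary integration with the controlled hypothesis \eqref{cc15} providing exactly the right bounds on the right subintervals.
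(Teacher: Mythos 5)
Your proof is correct and follows essentially the same route as the paper: split $Q$ by the subintervals on which \eqref{cc15} and \eqref{add_bound} give pointwise bounds, integrate, and then identify the resulting constant with $b_0$ or $b_1$. The only cosmetic difference is in that last step, where you verify the $\min/\max$ reduction by a two-case argument while the paper observes that the bracket is a convex combination of $1$ and $p\log 2$ (resp.\ $q\log 4$); the two arguments are equivalent.
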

\begin{proof}
First we look at the lower bound. For $x\in [A(t), 1]$, we use the third and fourth line of 
\eqref{cc15} to compute
\begin{align*}
Q(x, t) &\geq \phi \int_x^{1} y^{-p - 1} dy + \phi \int_1^2 y^{-1}~dy\\
&= \frac{\phi}{p}\left(x^{-p}-1\right) + \phi \log(2)\\
& = \frac{\phi x^{-p}}{p}\left((1-x^{p})\cdot 1 + x^{p} p\log(2)\right)\\
& \geq \frac{\phi x^{-p}}{p} \min\{1, p\log(2)\}.
\end{align*}
In going from the third to the fourth line of the calculation, we have used that $x^{p} \in [0, 1]$ so that the bracket is a convex combination of the numbers $1$ and $p\log2$. This shows the lower bound for $Q$.
The upper bound is found by a similar calculation, by integrating the first inequality of 
\eqref{cc15} and using in addition \eqref{add_bound}. The estimate \eqref{add_up_Q} follows by integrating $\om(x, t) \leq \psi$ for $x\in [1, 4]$.
\end{proof}

Next we fix $\phi, \psi, p, q, A_0$ as in the following Lemma: 
\begin{lemma}\label{lem6}  
Given $m\ge 1$, there exist $\phi, \psi, p, q, A_0, \de$ such that the following hold true:
\begin{align}\label{cond_params}
\begin{split}
0< p< q < \beta \leq 1,\quad p+q=\beta\\
\frac{m}{ b_0 q}<\phi\psi<\frac{1}{m  b_1 p}\\
\frac{A_0^{p}}{\phi  b_0 p} < \frac{\log(3/2)}{\psi \log(4)}\\
\frac{m A_0^{p}}{\phi  b_0 p} < \de
\end{split}
\end{align}
The set of all initial data $(\om_0, \rho_0)$ such that
\eqref{cond_init} holds is nonempty.
\end{lemma}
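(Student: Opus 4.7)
The plan is to fix parameters in the order $(p,q) \to (\phi,\psi) \to A_0 \to \delta$ and then exhibit a pair $(\om_0,\rho_0)$ by hand. The key structural observation driving the argument is that in the regime $p \in (0,1)$ one has $p \log 2 < 1$, so by \eqref{def_b} the constant $b_0$ is simply $\log 2$; consequently $b_0 q = q \log 2$ stays bounded below by a positive constant as $q \to \beta$, while $b_1 p = p \max\{1, q\log 4\}/q$ vanishes as $p \to 0^+$. This is precisely what makes the two-sided interval for $\phi\psi$ in \eqref{cond_params} nonempty.

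First I would choose $p \in (0,\beta/2)$ small enough so that $m^2 b_1 p < b_0 q$ with $q := \beta - p$; this secures $p + q = \beta$, $0 < p < q < \beta \le 1$, and the nonemptiness of the interval $(m/(b_0 q),\, 1/(m b_1 p))$. Pick any $K$ in that interval, then select $\phi$ with $0 < \phi < \sqrt{K}$ and set $\psi := K/\phi$, so that $\phi\psi = K$ and $\phi < \sqrt{K} < \psi$. Next choose $A_0 \in (0,1)$ so small that
\begin{align*}
A_0^{p} \;<\; \min\!\left\{\frac{\phi b_0 p \log(3/2)}{\psi \log 4},\;\; \frac{\phi b_0 p (\psi-\phi)}{2m}\right\};
\end{align*}
this is possible since $A_0^p \to 0$ as $A_0 \to 0^+$. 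Finally pick $\delta \in \bigl(m A_0^p/(\phi b_0 p),\, \psi - \phi\bigr)$, an interval made nonempty by the second smallness condition on $A_0$. All of the inequalities in \eqref{cond_params} are then in force.

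To exhibit initial data, I would fix $\eps \in (0, A_0)$ and build $\om_0$ from the midpoint profile $\tfrac12(\phi x^{-p} + \psi x^{-q})$ on $[A_0,1]$---which lies strictly between the required bounds since $\phi < \psi$ and $q > p$ give $\phi x^{-p} < \psi x^{-q}$ on $(0,1]$---joined by a smooth transition to the constant $\tfrac12(\phi + \psi - \delta) \in (\phi,\psi-\delta)$ on $[1,3]$, and tapered smoothly to $0$ on $[\eps,A_0]$ and on $[3,4]$; a light mollification then yields a $C_0^1$ function for which the strict inequalities in \eqref{cond_init} persist, since all the gaps are uniformly positive on the relevant compact subintervals. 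For $\rho_0$, I would take $\rho_0 \equiv 1$ on $[A_0,2]$ (compatible with $m^{-1}\le \rho_0 \le m$ because $m \ge 1$) and taper smoothly to $0$ outside $[\eps,3]$ while staying in $[0,m]$.

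The main obstacle is the very first step: arranging $m^2 b_1 p < b_0 q$ so that the interval for $\phi\psi$ is nonempty. Every other condition reduces to taking $A_0$ sufficiently small, followed by a routine mollification-based construction. The whole argument hinges on the precise forms of $b_0,b_1$ in \eqref{def_b} together with the constraint $p+q = \beta$, which together force $b_1 p/(b_0 q) \to 0$ as $p \to 0^+$ and open up the required window for $\phi\psi$.
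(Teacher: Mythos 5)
Your proposal is correct and follows essentially the same route as the paper: pick $p$ small with $q=\beta-p$, exploit the fact that $b_0=\log 2$ for $p\in(0,1)$ while $b_1p\to 0$ to open the window for $\phi\psi$, choose $\phi\psi$ in that window with $\phi<\psi$, and then shrink $A_0$ to satisfy the remaining two inequalities and leave room for $\delta$. The only difference is that you supply an explicit model profile for $(\om_0,\rho_0)$ and a concrete placement $\phi<\sqrt K<\psi$, whereas the paper simply declares the set of admissible data nonempty (via ``$\psi$ large, $\delta$ small''); both are valid instantiations of the same argument.
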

\begin{proof}
We choose a small $p > 0$. Define $q$ by $q = \beta - p$ and as a consequence, $q$ converges to $\beta$ as $p\to 0$. Note that $b_0 \to \log 2$ as $p\to 0$ and that $b_1$ has a finite limit as $p\to 0$. We hence see that
$$
\frac{m}{b_0 q} < \frac{1}{m b_1 p}
$$
for sufficiently small $p > 0$. Now choose a $c \in 
[\frac{m}{b_0 q}, \frac{1}{m b_1 p}]$ and define $\phi = c \psi^{-1}$. It is clear that by choosing $\psi$ large and $\delta$ small, the set of $\om_0$ satisfying \eqref{cond_init} is non-empty. At this point, we regard the values of $p, q, \phi, \psi$ to be fixed. Finally, the third, fourth and fifth condition of \eqref{cond_params} hold if
$A_0$ is sufficiently small. 
\end{proof}

The following Lemma gives an upper bound on the lifetime of 
a controlled solution:
\begin{lemma}\label{upperboundTime}
Suppose the smooth solution $(\om, \rho)$ is controlled on $[0, T)$. Then $T < T^{*}$, where
\begin{align}
T^{*} = \frac{A_0^{p}}{\phi  b_0 p}.
\end{align}
\end{lemma}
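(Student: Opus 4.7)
The plan is to extract an ODE inequality for the inner particle trajectory $A(t)$ from the lower bound on $Q$ in Lemma \ref{controlQ1}, and then integrate it to find a finite time at which $A(t)$ must reach zero. Since a controlled solution requires $A(t)>0$ throughout (all the inverse-power bounds are stated on $[A(t),1]$), the lifetime $T$ must lie below this critical time.

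Concretely, the first step is to recall the ODE defining $A(t)$:
\begin{equation*}
\dot A(t) \;=\; u(A(t),t) \;=\; -A(t)\,Q(A(t),t),
\end{equation*}
and apply Lemma \ref{controlQ1} at $x = A(t)$, which belongs to $[A(t),1]$ as long as $A(t)\le 1$ (which holds since $A_0<1$ and particles only move to the left). This yields
\begin{equation*}
Q(A(t),t) \;\ge\; b_0\,\phi\,A(t)^{-p},
\end{equation*}
hence the differential inequality
\begin{equation*}
\dot A(t) \;\le\; -\,b_0\,\phi\,A(t)^{1-p}.
\end{equation*}

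The second step is to multiply by $p\,A(t)^{p-1}$ (which is positive), obtaining
\begin{equation*}
\frac{d}{dt}\bigl(A(t)^{p}\bigr) \;=\; p\,A(t)^{p-1}\,\dot A(t) \;\le\; -\,p\,b_0\,\phi,
\end{equation*}
and then integrate from $0$ to $t$ to get
\begin{equation*}
A(t)^{p} \;\le\; A_0^{p} - p\,b_0\,\phi\,t.
\end{equation*}
By definition of a controlled solution, all the controlling inequalities \eqref{cc15} must hold on $[A(t),1]$ for every $t\in[0,T)$, which forces $A(t)>0$ on $[0,T)$. Therefore the right-hand side above must be strictly positive on $[0,T)$, which requires $t<A_0^p/(p\,b_0\,\phi)=T^{*}$ for all such $t$, and hence $T\le T^{*}$. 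To sharpen to strict inequality $T<T^{*}$, one argues that if $T=T^{*}$ then $A(t)\to 0$ as $t\to T$, and in a short neighborhood of $T$ the estimate $A(t)^{p}\le A_0^{p}-p\,b_0\,\phi\,t$ would already drive $A$ below zero just past $T$, so $T$ cannot be a maximal interval of control in the smooth class.

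I do not anticipate any real obstacle; the key idea is simply to evaluate the lower bound for $Q$ precisely at the tightest admissible point $x=A(t)$, so that the resulting scalar ODE inequality for $A^{p}$ becomes autonomous and linear in $t$. The only mild subtlety is the use of $A(t)\le 1$ to stay within the range of validity of Lemma \ref{controlQ1}, which is immediate from $\dot A\le 0$ and $A_0<1$.
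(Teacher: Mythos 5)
Your proof is correct and follows essentially the same route as the paper: both derive $-\dot A(t) \ge \phi\, b_0\, A(t)^{1-p}$ from the lower bound in Lemma~\ref{controlQ1} and integrate to obtain $A(t)^p \le A_0^p - p\,b_0\,\phi\, t$, from which $T \le T^*$ follows since control requires $A(t)>0$. The paper's write-up is terse ("the conclusion follows by integrating"); you have simply filled in the elementary integration and added a short, slightly informal remark about the strictness of the inequality, which does not change the substance of the argument.
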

\begin{proof}
From Lemma \ref{controlQ1}, we obtain the estimate
\begin{align}
- u(A(t), t) \geq \phi  b_0 A(t)^{1-p}.
\end{align}
The conclusion follows by integrating the inequality $-\dot A \geq \phi  b_0 A(t)^{1-p}$. 
\end{proof}

The following Lemma plays a key role. It allows us to propagate the control condition.
\begin{lemma}[Trapping lemma]\label{lem_trap1}
Assume that $\om$ is controlled on $[0, T)$ for some positive $T> 0$, $T < T_s$. Consider particle trajectories $X(t)$ such that $A_0\leq X(0)$. Let $t^* \leq t$ be some time such that $X(t^*) \leq 1$ and such that
\begin{align}
\phi X(t^*)^{-q} < \om(X(t^*), t^*) < \psi X(t^*)^{-q}
\end{align}
holds. Assume that \eqref{cond_params} holds and that $m^{-1} \leq \rho(X(t), t)\leq m$. Then 
\begin{align}
\phi X(T)^{-p} < \om(X(T), T) < \psi X(T)^{-q}.
\end{align}
\end{lemma}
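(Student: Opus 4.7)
The plan is a barrier (``trap'') argument along the trajectory $X(t)$. The starting point is the identity $\tfrac{d}{dt}\omega(X(t),t) = \rho(X(t),t)/X(t)^{\beta}$, which follows directly from the first equation of \eqref{model1} and $\dot X(t) = u(X(t),t) = -X(t)\,Q(X(t),t)$. Since $\om,\rho\ge 0$ throughout, we have $u\le 0$, so $X(\cdot)$ is nonincreasing and hence $X(t)\le X(t^*)\le 1$ for all $t\in[t^*,T]$. Because $A_0\le X(0)$ and $X,A$ obey the same ODE, uniqueness gives $A(t)\le X(t)$ throughout, so Lemma~\ref{controlQ1} applies at the point $X(t)$ and supplies the crucial two-sided estimate $b_0\phi\, X(t)^{-p}\le Q(X(t),t)\le b_1\psi\, X(t)^{-q}$. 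These $Q$-bounds, the $\rho$-bounds assumed in the lemma, and the identity $p+q=\beta$ are the only ingredients needed.

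For the upper bound I would set $F(t)=\psi X(t)^{-q}-\omega(X(t),t)$. The hypothesis gives $F(t^*)>0$. Suppose for contradiction that $t_1\in(t^*,T]$ is the first time at which $F(t_1)=0$; then $F'(t_1)\le 0$. A direct computation yields $F'(t)=q\psi\,X(t)^{-q}Q(X(t),t)-\rho(X(t),t)\,X(t)^{-\beta}$. Inserting the lower bound on $Q$, the upper bound $\rho\le m$, and using $p+q=\beta$ to collapse the two powers of $X$, one obtains $F'(t_1)\ge (qb_0\phi\psi-m)\,X(t_1)^{-\beta}$. The parameter condition $\phi\psi>m/(b_0 q)$ from Lemma~\ref{lem6} makes this strictly positive, contradicting $F'(t_1)\le 0$.

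For the lower bound, set $G(t)=\omega(X(t),t)-\phi X(t)^{-p}$. Since $X(t^*)\le 1$ and $p<q$, we have $X(t^*)^{-q}\ge X(t^*)^{-p}$, so the hypothesis $\om(X(t^*),t^*)>\phi X(t^*)^{-q}$ already implies $G(t^*)>0$ with strict margin. If $t_2\in(t^*,T]$ were a first zero of $G$, then $G'(t_2)\le 0$, but $G'(t)=\rho(X,t)\,X(t)^{-\beta}-\phi p\,X(t)^{-p}Q(X,t)$, and the upper bound on $Q$ together with $\rho\ge m^{-1}$ yields $G'(t_2)\ge (m^{-1}-pb_1\phi\psi)\,X(t_2)^{-\beta}$. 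This is strictly positive by the companion condition $\phi\psi<1/(mb_1p)$ of Lemma~\ref{lem6}, yielding a contradiction.

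There is no deep obstacle: the two parameter conditions on $\phi\psi$ in Lemma~\ref{lem6} are \emph{designed precisely} to make the two barrier-derivatives positive, and the relation $p+q=\beta$ is what causes the $X$-dependence to cancel at the crossing times, reducing the check to a purely numerical sign inequality. The only geometric facts needed, namely $X(t)\le 1$ and $A(t)\le X(t)$ on $[t^*,T]$, both follow from $u\le 0$ and ODE uniqueness, and these in turn are what legitimises invoking Lemma~\ref{controlQ1} along $X$.
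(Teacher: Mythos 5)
Your proof is correct, but it takes a genuinely different route from the paper. The paper integrates the ODE $\frac{d}{dt}\om(X(t),t) = \rho(X(t),t)/X(t)^{\beta}$ along the trajectory from $t^*$ to $T$, substitutes $ds = \frac{-dX}{X\,Q}$ and uses the $Q$-bounds of Lemma~\ref{controlQ1} to estimate the resulting integral, and then writes the lower bound for $\om(X(T),T)X(T)^{p}$ as a convex combination (with weight $\eta = X(T)^{p}X(t^*)^{-p}\in[0,1]$) of $\om(X(t^*),t^*)X(t^*)^{p}$ and $\frac{1}{m\psi b_1 p}$, checking both endpoints exceed $\phi$; the upper bound is handled symmetrically. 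You instead set up barrier functions $F(t)=\psi X(t)^{-q}-\om(X(t),t)$ and $G(t)=\om(X(t),t)-\phi X(t)^{-p}$ and run a first-crossing-time argument: at a first zero of either barrier its derivative must be nonpositive, yet the same ingredients (Lemma~\ref{controlQ1}, the $\rho$-bounds, $p+q=\beta$, and the two conditions on $\phi\psi$ from Lemma~\ref{lem6}) force the derivative to be strictly positive. The key observation that $p+q=\beta$ cancels the $X$-dependence at the crossing time is exactly the same observation that makes the paper's change-of-variables yield a power $X^{-p}$. Your approach is somewhat cleaner in that it only needs the pointwise $Q$-estimate at one time rather than under an integral and sidesteps the convexity bookkeeping, while the paper's version is slightly more quantitative since it tracks the full lower bound for $\om(X(t),t)X(t)^{p}$. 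Both are valid; the parameter inequalities in Lemma~\ref{lem6} are, as you say, tuned precisely so the barrier derivatives (equivalently, the two convex endpoints) come out on the right side of $\phi$ and $\psi$.
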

\begin{proof}
We integrate along $X(t)$ to obtain for all $t\leq T$:
\begin{align*}
\om(X(t), t) &= \om(X(t^*), t^*) + \rho_0(X_0)\int_{t^*}^t\frac{ds}{X(s)^{\beta}} \\
& \geq  \om(X(t^*), t^*) + m^{-1}\int_{t^*}^t\frac{1}{X(s)^{\beta}}\frac{(-\dot X(s))}{(-X(s))} ds\\
& \geq  \om(X(t^*), t^*) + \frac{1}{m \psi b_1}\int_{t^*}^t\frac{1}{X(s)^{\beta}}\frac{(-\dot X(s))}{X(s)^{1+q}} ds\\
& \geq \om(X(t^*), t^*) + \frac{1}{m \phi  b_1 (\beta-q )}(X(t)^{-\beta+q}-X(t^{*})^{-\beta+q})\\
&\geq \om(X(t^*), t^*) + \frac{1}{m \psi b_1 p}(X(t)^{-p}-X(t^{*})^{-p})\\
&\geq \left[\om(X(t^*), t^*)X(t)^{p} + \frac{1}{m \psi b_1 p}(1-X(t)^{p}X(t^{*})^{-p})\right] X(t)^{-p}\\
&\geq \left[\om(X(t^*), t^*) X(t^*)^p \eta  + \frac{1}{m \psi b_1 p}(1-\eta)\right] X(t)^{-p}
\end{align*} 
where we have used Lemma \ref{controlQ1}, used the relation $p+q = \beta$ and have written $\eta = X(t)^p X(t^*)^{-p}$. Now note that $\eta \in [0, 1]$, so in order for $\phi X(t)^{-p} < \om(X(t), t)$ to hold, the following two conditions 
are sufficient:
\begin{align*}
\om(X(t^*), t^*) X(t^*)^p > \phi,\\
\frac{1}{m \psi b_1 p} > \phi.
\end{align*} 
The first of those holds by assumption and the second condition holds because of \eqref{cond_params}. The upper bound on $\om(X(T), T)$ is obtained in a similar way. 
\end{proof}

\begin{theorem}\label{blowupSmooth}
Let the parameters $p, q, \phi, \psi, A_0, \delta$ be chosen as in the conclusion of Lemma \ref{lem6}, and let the initial data $(\om_0, \rho_0)$ be suitably prepared with parameter values satisfying \eqref{cond_params}. Let $(\om, \rho)$ be the unique smooth 
solution with initial data $(\om_0, \rho_0)$ defined on its maximal existence interval $[0, T_s)$
characterized by the property that
$\lim_{t\to T_s} \|\om\|_{L^\infty(\R^+)} = \infty$.
Then $(\om, \rho)$ is controlled on $[0, T_s)$. As a consequence, $T_s < \infty$. 
\end{theorem}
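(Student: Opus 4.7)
The plan is a standard bootstrap/continuity argument on the interval of validity of \eqref{cc15}. Set
\[
T_0 \;=\; \sup\bigl\{\,T \in [0, T_s) : (\om, \rho) \text{ is controlled on } [0, T)\,\bigr\}.
\]
Since suitably prepared data satisfy the four inequalities of \eqref{cc15} strictly at $t = 0$ and the smooth solution depends continuously on $t$, we have $T_0 > 0$. The goal is $T_0 = T_s$; assuming instead $T_0 < T_s$, continuity forces the four inequalities to hold non-strictly at $T_0$, and it will suffice to reestablish them \emph{strictly} there---a further continuity step then extends control to $[0, T_0 + \eps)$, contradicting maximality. Once $T_0 = T_s$ is established, Lemma~\ref{upperboundTime} immediately gives $T_s < A_0^p/(\phi b_0 p) < \infty$.

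To reestablish the two bounds on $[A(T_0), 1]$, fix $x$ in that interval and let $X(\cdot)$ be the trajectory with $X(T_0) = x$. Since $\om \geq 0$ one has $u \leq 0$, so particles move leftward without crossing and $X(0) \geq A_0$. If $X(0) \in [A_0, 1]$, the initial-data bounds yield $\phi X(0)^{-p} < \om_0(X(0)) < \psi X(0)^{-q}$, which is the Trapping Lemma hypothesis with $t^* = 0$; if $X(0) > 1$, there is a first time $t^*$ with $X(t^*) = 1$, and the controlled conditions on $[1, 3]$ and $[1, 2]$ give $\phi = \phi \cdot 1^{-p} < \om(1, t^*) < \psi \cdot 1^{-q} = \psi$, again the Trapping Lemma hypothesis. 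In both cases Lemma~\ref{lem_trap1} yields the strict bounds at $t = T_0$; strictness is preserved because the second inequality of \eqref{cond_params}, namely $\frac{1}{m\psi b_1 p} > \phi$, enters as a strict ingredient in the convex-combination estimate in the proof of that lemma.

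For $x \in [1, 3]$ the backward trajectory satisfies $X(s) \geq x \geq 1$ for all $s \leq T_0$; integrating $\frac{d}{dt}\om(X(t), t) = \rho_0(X(0))/X(t)^\beta$ and using $\om_0 \leq \psi - \delta$ on $[1, 4]$, $\rho_0 \leq m$, and $X(s)^{-\beta} \leq 1$ gives $\om(x, T_0) \leq (\psi - \delta) + m T_0$, which is strictly less than $\psi$ by the fourth condition of \eqref{cond_params} combined with $T_0 < A_0^p/(\phi b_0 p)$. For the lower bound on $[1, 2]$, nonnegativity of $\rho_0/X^\beta$ makes $\om$ nondecreasing along trajectories, so $\om(x, T_0) \geq \om_0(X(0))$, and it only remains to ensure $X(0) \leq 3$ (so that $\om_0(X(0)) > \phi$). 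For this use the third condition of \eqref{cond_params}: the controlled bound $\om \leq \psi$ on $[1, 4]$ (with \eqref{add_bound}) gives $Q(y, s) \leq \psi \log 4$ for $y \geq 1$, hence $|u(y, s)| \leq y\psi \log 4$ on that range, and Gr\"onwall yields $X(T_0) \geq X(0) \cdot 4^{-\psi T_0}$; taking $X(0) = 3$, the inequality $X(T_0) > 2$ reduces to $\psi T_0 < \log(3/2)/\log 4$, exactly the third condition of \eqref{cond_params}. Since particles do not cross, no trajectory with $X(0) > 3$ can enter $[1, 2]$ on $[0, T_0]$.

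The principal obstacle is precisely this last point: particles initially sitting in $(3, 4]$ carry a frozen value of $\om$ (because $\rho_0$ vanishes there) that may be arbitrarily small, so any such trajectory reaching $[1, 2]$ before $T_0$ would instantly destroy the lower bound $\phi < \om$ there. The travel-time estimate from $|u| \leq y\psi \log 4$ on $\{y \geq 1\}$, calibrated against Lemma~\ref{upperboundTime}, is the precise purpose of the third and fourth inequalities of \eqref{cond_params}; once this is in hand the bootstrap closes and the theorem follows.
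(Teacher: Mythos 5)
Your proposal is correct and follows essentially the same route as the paper: a continuity/bootstrap argument propagating \eqref{cc15}, with the bounds on $[A(t),1]$ handled via Lemma~\ref{lem_trap1} (choosing $t^*=0$ or the first hitting time of $x=1$), the bounds on $[1,2]$ and $[1,3]$ handled by integrating the $\om$-equation along trajectories, and the travel-time estimate from $Q\le\psi\log 4$ together with Lemma~\ref{upperboundTime} used to exclude particles entering from $x\ge 3$; finiteness of $T_s$ then follows from Lemma~\ref{upperboundTime}. The observation that excluding incursions from $(3,4]$ (where $\rho_0=0$, hence $\om$ is frozen) is the purpose of the third condition of \eqref{cond_params} is an accurate reading of why that condition is there.
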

\begin{proof}
Because \eqref{cond_init} holds,
 there exists a time $\tau>0$ such that the solution
is controlled on the time interval $[0, \tau)$. So assume that the solution is controlled up to a time $T_c>0$ and that at $t=T_c$, one of the strict inequalities in 
\eqref{cc15} is violated. This means that there is a particle trajectory $X(t)$ such that at time $t=T_c$, we have equality in one the bounds in \eqref{cc15}. First we consider the case that one of the lower bounds is violated. 

\emph{Case 1}: $X(T_c)\in [1, 2]$. Note that $X(0) \geq 1$.
Consider the following subcases, the first one being $X(0) \in [1, 3]$. Because $\rho_0\geq 0$, it follows by integrating \eqref{model1} along the trajectory that 
\begin{align*}
\om(X(T_c), T_c) \geq \om(X(0)) > \phi
\end{align*}
because of $\om(X(0), 0) = \om_0(X(0)) > \phi$.
(see \eqref{cond_init}). We now show that $X(0) \geq 3$ cannot occur.
From the assumption that the solution was controlled up to the time $T_c$ and Lemma
\ref{controlQ1}, we obtain the estimate $-\dot X(t)\leq  \psi\log(4) X(t)$ and hence
\begin{align*}
X(t) \geq X(0) e^{-\psi \log (4) t}.  
\end{align*}
Noting that $t\leq T^*$ by Lemma \ref{upperboundTime} and using the explicit form of $T^*$, we obtain, by the third condition of \eqref{cond_params} the bound 
$X(T_c) > 2$, provided $X(0) \geq 3$. This means that the particles with $X(0) \geq 3$ did not have enough time to reach positions inside $[1, 2]$. 

\emph{Case 2}: $X(T_c)\in [A(T_c), 1]$. Here we distinguish the again the two subcases
$X(0)\in [A_0, 1]$ and $X(0)\in [1, 3]$. $X(0) \geq 3$ cannot occur, as shown above.
Consider first the case that $X(0)\in [1, 3]$. We then know that $m^{-1} \leq \rho(X(t), t) \leq m$, so we can apply Lemma \ref{lem_trap1} where $t^*$ is the time such that
$X(t^*) = 1$ to see that 
\begin{align}
\phi X(T_c)^{-p} < \om(X(T_c), T_c) 
\end{align}
holds. In the case $X(0)\in [A_0, 1]$, we take $t^*=0$ and again apply Lemma \ref{lem_trap1}.
In summary, the lower bounds in \eqref{cc15} do not fail. 

For the upper bounds, we also distinguish the two cases:

\emph{Case 1}: $X(T_c)\in [1, 3]$. The third condition in \eqref{cond_params} ensures that
$X(0) < 4$. Since $X(s) \geq 1$ for $s\leq T_c$, we have the estimate
\begin{align*}
\om(X(T_c), T_c) \leq \om_0(X(0)) + m T_c.
\end{align*}
Inserting the fourth condition of \eqref{cond_params}, 
using $\om_0(X(0)) < \psi - \de$ and $T_c \leq T^*$ (from Lemma \ref{upperboundTime}) shows that 
\begin{align*}
\om(X(T_c), T_c) < \psi - \de + m T^* 
\end{align*}
which is less than $\psi$ because of the fifth inequality in \eqref{cond_params}. 

\emph{Case 2}: $X(T_c)\in [A(T_c), 1]$. This case is similar to case 2 above for the lower bound, using the trapping Lemma \eqref{lem_trap1}.

From all the preceding it follows that the solution remains controlled on its whole existence interval $[0, T_s)$.

It remains to show that the lifetime $T_s$ of the solution is finite. From the fact that the solution is controlled on the whole interval
$[0, T_s)$ and Lemma \eqref{controlQ1}, it follows that
$$
- A(t) \geq  b_0 A(t)^{1 - p},
$$ 
and since $1 - p < 1$, this implies that $A(t)$ reaches zero in finite time. 
\end{proof}

%
\section{Asymptotic estimates for the vorticity}\label{sec:blowupAsymp}
%

In this section we show Theorem \ref{main_theorem_2}. The central idea consists of iterating the previous barrier construction on smaller and smaller scales. The exponents $p, q$ are replaced by sequences $p_n, q_n$ and the quality of the barriers will improve as the scales get smaller. For convenience, we do the construction for $\beta = 1$.

We therefore introduce a number of sequences satisfying certain conditions. Let $\la_n$ be a strictly decreasing sequence such that
$$1>\la_{-2}>\la_{-1}>\la_0>\la_n\to 0\quad (n\to\infty).$$
Let
$$I_n=[\la_{n},\la_{n-1}].$$
$\phi_n,\psi_n,q_n,p_n, m_n,M_n$ are sequences with the following properties:
\begin{align}\label{condition1}
0 < \phi_n < \psi_n, \\
p_n\nearrow \half,\quad q_n\searrow \half,\quad p_n+q_n=1\\
m_n\nearrow 1,\quad M_n\searrow 1.
\end{align}
For the sequences the following conditions shall hold:
\begin{align}
\phi_n \psi_n &= 1~~~(n\geq 1)\label{c__1}\\
\la_n/\la_{n-1} &\to 0~~~~(n\to \infty)\label{c_1}\\
\la_n^{q_n-q_{n-1}}&\longrightarrow 1~~(n\to\infty)\label{c_0}\\
\frac{q_n}{q_{n-1}}\frac{\psi_{n-1}}{\psi_n}\la_{n-1}^{q_n-q_{n-1}}&\le 1~~~~(n\geq 2)\label{c1}\\
\frac{p_n}{p_{n-1}} \frac{\phi_{n-1}}{\phi_n}\la^{p_n-p_{n-1}} &\ge 1~~~~(n\geq 2) \label{c2}\\
\phi_1 \la_0^{-p_1} &< \psi_1 \la_0^{-q_1} \label{relay_start}\\
\phi_{n-1}\la_{n-1}^{-p_{n-1}}&\ge\phi_{n}\la_{n-1}^{-p_{n}}\label{relay1}~~~~(n\geq 2)\\
\psi_{n-1}\la_{n-1}^{-q_{n-1}}&\le\psi_{n}\la_{n-1}^{-q_{n}}~~~~(n\geq 2)\label{relay2}\\
\frac{p_n}{m_n(1-p_n)}<1 &<\frac{q_n}{M_n(1-q_n)}~~~~ (n\geq 1)\label{trapping}
\end{align}
The conditions \eqref{relay1} and \eqref{relay2} are called relay conditions, \eqref{trapping} is called trapping condition. Note that
\begin{align*}
\phi_n x^{-p_n} < \psi_n x^{-q_n} \quad (x\in I_n).
\end{align*}

Again, the class of initial data for which our statements in Theorem \ref{main_theorem_2} holds is contained in the following Definition. 
\begin{definition}
The initial data $(\om_0, \rho_0)$ is called suitably prepared if for some $0 < A_0 < \la_0$ and some $\de>0$,  
$\om_0$ and $\rho_0$ satisfy
\begin{align}\label{con2}
\begin{split}
\om_0(x) \geq 0, \rho_0(x) \geq 0 &\quad (x\in \R^+)\\
\operatorname{supp} \om_0, \operatorname{supp} \rho_0 \subset [\eps, \lambda_{-2}+\de]~~&\text{for some}~0 < \eps < A_0\\
\phi_n x^{-p_n} < \om_0(x) < \psi_nx^{-q_n}&\quad (x\in I_n,~x\ge A_0,~ n=1,2,3,\ldots)\\
\om_0(x)<\psi_1\la_0^{-q_1}-\de &\quad(x\in[\la_0,\la_{-2}+\de])\\
\om_0(x)>\phi_1\la_0^{-q}&\quad(x\in[\la_0,\la_{-1}])\\
\rho_0(x) = 1 &\quad (x\in [A_0, \la_{-2}])
\end{split}
\end{align}
where $\de>0$ is so small such that $\phi_1 \la_0^{-p_1} < \psi_1\la_0^{-q_1}-\de$ holds.
\end{definition}

\begin{definition}\label{def_suitablyPrep2}
Let $(\om, \rho)$ be a smooth solution. We say that the solution is controlled on $[0, T)$, if for all $t\in [0, T)$
the following conditions  hold:
\begin{align}\label{con1}
\begin{split}
\phi_nx^{-p_n} < \om(x,t) < \psi_nx^{-q_n}&\quad (x\in I_n,~x\ge A(t),~ n=1,2,3,\ldots)\\
\om(x,t)<\psi_1\la_0^{-q_1}&\quad(x\in[\la_0,\la_{-2}])\\
\om(x,t)>\phi_1\la_0^{-q}&\quad(x\in[\la_0,\la_{-1}])
\end{split}
\end{align}
Here, $A(t)$ is a particle trajectory such that $A(0) = A_0$, $0 < A_0 < \la_{0}$, where $A_0$
is fixed.
\end{definition}

Theorem \ref{main_theorem_2} is implied by the following: 
\begin{theorem}
There exists a choice of parameters such that the set suitably prepared initial data in the sense of Definition 
is not empty. For all such initial data, the lifetime $T_s$ of the corresponding solutions is finite and for all particle 
trajectories $X(t)$ with 
\begin{align*}
A_0\leq X(0)\leq \lambda_0,
\end{align*} 
the estimate
\begin{align}\label{asymptotic_omega_1}
\phi_{\infty} \leq \lim_{t\to T_s} \om(X(t), t) X(t)^{1/2} \leq \psi_{\infty}
\end{align}
holds for some $\phi_\infty, \psi_\infty>0$ with $\phi_\infty \psi_\infty = 1$.
Moreover, there exists a positive time $T^* < \infty$ such that if the solution $(\om(\cdot, t), \rho(\cdot, t))$ remains smooth on the time interval $[0, T^*)$, then 
\begin{align}
\lim_{t\to T^*} A(t) = 0.
\end{align}
\end{theorem}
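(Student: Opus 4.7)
The plan is to mirror the bootstrap/trapping argument of Section \ref{sec:blowupSmooth}, but iterated simultaneously on all the nested scales $I_n$. First I would exhibit explicit sequences satisfying \eqref{condition1}--\eqref{trapping}: for instance $p_n = \tfrac12 - \epsilon_n$, $q_n = \tfrac12 + \epsilon_n$, $\psi_n = 1 + \eta_n$, $\phi_n = 1/\psi_n$, with $\epsilon_n, \eta_n \searrow 0$ chosen slowly and $\lambda_n \searrow 0$ chosen fast enough that all relay conditions \eqref{relay1}--\eqref{relay2} and the convergences \eqref{c_1}, \eqref{c_0} are satisfied. The relay conditions are precisely what forces the barriers on $I_{n-1}$ and $I_n$ to match at $\lambda_{n-1}$, and once the sequences are fixed the nonemptiness of \eqref{con2} is immediate: any continuous nonnegative profile lying strictly between the envelopes on each $I_n$, with $\rho_0 \equiv 1$ on $[A_0, \lambda_{-2}]$, will do.

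The key analytic step is a trapping lemma on each $I_n$, which in turn rests on a refinement of Lemma \ref{controlQ1}. For $x \in I_n$ with $\omega$ controlled, I would split the integral \eqref{def_Q2} over $I_n, I_{n-1}, \ldots$ to obtain bounds of the form
\begin{equation*}
\frac{\phi_n}{p_n}\bigl(x^{-p_n} - \lambda_{n-1}^{-p_n}\bigr) + L_n \le Q(x,t) \le \frac{\psi_n}{q_n}\bigl(x^{-q_n} - \lambda_{n-1}^{-q_n}\bigr) + U_n,
\end{equation*}
where the outer-tail contributions $L_n, U_n$ are absorbed into the local main terms via telescoping applications of \eqref{relay1}--\eqref{relay2}. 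Then, integrating $\omega_t + u\omega_x = \rho/x$ along a trajectory $X(t) \subset I_n$ --- where $\rho \in [m_n, M_n]$ since $\rho$ is transported and $\rho_0 \equiv 1$ on $[A_0, \lambda_{-2}]$ --- the algebraic manipulation of Lemma \ref{lem_trap1} goes through, with the condition $\phi < 1/(m\psi b_1 p)$ now replaced exactly by the trapping condition \eqref{trapping}. The relay inequalities evaluated at the time $t^*$ when $X$ last crossed into $I_n$ through $\lambda_{n-1}$ play the role that the initial strict inequality did in Section \ref{sec:blowupSmooth}.

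With these ingredients, the bootstrap of Theorem \ref{blowupSmooth} applies almost verbatim. If some inequality in \eqref{con1} first becomes equality at $t = T_c$ along $X(t)$, either $X(T_c)$ lies in an outer buffer interval $[\lambda_0,\lambda_{-2}]$, in which case one argues as in Case~1 of that proof (using \eqref{c_1} to rule out particles arriving from $[\lambda_{-2}, \lambda_{-1}]$ too quickly), or $X(T_c) \in I_n$ for some $n \ge 1$, in which case the trapping lemma on $I_n$ yields strict inequality --- contradiction. Hence the solution is controlled throughout $[0, T_s)$. Finiteness of $T_s$ follows from $-\dot A(t) \ge c_n A(t)^{1 - p_n}$ while $A(t) \in I_n$, whose integrals yield a summable sequence of crossing times by \eqref{c_1}; this gives a finite $T^*$ after which $A(t)$ reaches $0$, and the continuation criterion of Theorem \ref{t3} forces $T_s \le T^*$. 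Finally, for a particle trajectory $X(t)$ with $X(0) \in [A_0, \lambda_0]$, the control on $I_n$ reads
\begin{equation*}
\phi_n X(t)^{1/2 - p_n} \le \omega(X(t), t) X(t)^{1/2} \le \psi_n X(t)^{1/2 - q_n} \qquad (X(t) \in I_n),
\end{equation*}
and letting $n \to \infty$ (equivalently $t \to T_s$) yields \eqref{asymptotic_omega_1}, using that $\lambda_{n-1}^{1/2 - p_n}$ and $\lambda_{n-1}^{1/2 - q_n}$ both tend to $1$ by \eqref{c_0} while $\phi_n \psi_n \equiv 1$ by \eqref{c__1}.

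The hardest step will be the refined $Q$-estimate: the outer tails $L_n, U_n$ must be controlled in a way that still leaves the trapping condition \eqref{trapping} sufficient to close the barrier argument. Once this estimate is executed cleanly, everything else is a careful but structurally straightforward rerun of Section \ref{sec:blowupSmooth}.
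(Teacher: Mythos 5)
Your overall plan — a refined $Q$-estimate with multiplicative error factors $m_n, M_n \to 1$ on each interval $I_n$, a trapping lemma on each scale driven by \eqref{trapping}, a bootstrap in the style of Theorem \ref{blowupSmooth}, and a final passage to the limit $n \to \infty$ — matches the paper's strategy. However, the explicit parameter ansatz you propose cannot satisfy the relay conditions, and this reflects a missed structural point rather than a typo.

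You take $\psi_n = 1 + \eta_n$ with $\eta_n \searrow 0$, so $\psi_n$ decreases to $1$, and $\phi_n = 1/\psi_n$ increases to $1$. But \eqref{relay1} reads $\phi_{n-1}\la_{n-1}^{-p_{n-1}} \ge \phi_n \la_{n-1}^{-p_n}$; since $p_n > p_{n-1}$ and $\la_{n-1}<1$ we have $\la_{n-1}^{-p_n} > \la_{n-1}^{-p_{n-1}}$, which forces $\phi_n < \phi_{n-1}$, i.e. $\phi_n$ must \emph{decrease}. Dually \eqref{relay2} forces $\psi_n$ to \emph{increase}. The barrier constants have to spread apart from scale to scale, not tighten towards $1$. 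Once you adopt a compatible choice such as the paper's $\phi_n = \phi_1 \prod_{j=2}^{n} \la_{j-1}^{\eps_{j-1}-\eps_j}$ (which is decreasing), the real difficulty — which your proposal never addresses — becomes showing that $\phi_n$ nevertheless stays bounded away from zero, i.e. $\sum_{j\ge 2} |\log\la_{j-1}|\,|\eps_{j-1}-\eps_j| < \infty$. This requires $\eps_n = \tfrac12 - p_n$ to tend to zero fast relative to how fast $|\log\la_n|$ grows; your heuristic of taking $\eps_n$ to decrease ``slowly'' and $\la_n$ ``fast'' is exactly the wrong balance for this sum, and without it $\phi_\infty = 0$ and the lower bound in \eqref{asymptotic_omega_1} degenerates. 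The paper reconciles the competing demands with $\eps_n = \eps_1 e^{-(n-1)}$ and $\la_n = \la_0 e^{-Ln^2}$, and a large $L$ then also makes the $Q$-estimate errors $\mu_n$ small enough that \eqref{trapping} closes.

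Two smaller inaccuracies: the factors $m_n, M_n$ in \eqref{trapping} are the multiplicative errors in the refined $Q$-estimate of Lemma \ref{lem_control_Q}, not bounds on $\rho$ — since $\rho_0 \equiv 1$ on $[A_0,\la_{-2}]$, $\rho$ is identically $1$ along the relevant trajectories. And the assertion that $\la_{n-1}^{1/2-p_n} \to 1$ does not follow from \eqref{c_0}, which controls $\la_n^{q_n-q_{n-1}}$; it is a separate property that must be verified directly from the explicit choice of $\la_n$ and $p_n$.
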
 

A crucial role is played by the following Lemma, which is used to control the velocity.
\begin{lemma}\label{lem_control_Q}
Let $\om$ satisfy the control conditions \eqref{con1} on $[0,T]$. Define
\begin{align}
F = \sup_{n\geq 2} \left[\frac{q_n \psi_{n-2}}{q_{n-2}\psi_{n}}\la_{n-1}^{q_n - q_{n-1}} \la_{n-1}^{q_{n-1}-q_{n-2}} + \frac{p_n \phi_{n-1}}{p_{n-1}\phi_n}\la_{n-1}^{p_n-p_{n-1}}+\frac{q_n \psi_1}{\psi_{n}}\right].
\end{align}
Then for all $n\ge 1$
\begin{align}
\frac{\phi_n}{p_n}m_nx^{-p_n}&\le Q(x,t)\le \frac{\psi_n}{q_n}M_nx^{-q_n} 
\end{align}
where $m_n =1-\mu_n, M_n = 1+\mu_n$ with 
\begin{align}
\mu_n &= C F \left(\frac{\la_{n-1}}{\la_{n-2}}\right)^{q_1}+ C \log\left(\frac{\la_{-2}}{\la_0}\right)\la_{0}^{-q_1} \la_{n-1}^{q_n} \quad (n\geq 2)\\
\mu_{1} &= \log\frac{\la_{-2}}{\la_0}
\end{align}
and $C>0$ is a universal constant. 
\end{lemma}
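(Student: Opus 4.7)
The plan is to decompose $Q(x,t)=\int_x^\infty \om(y,t)/y\,dy$ into a main contribution on the relevant dyadic-like interval $I_n$, plus remote contributions from the larger intervals $I_1,\ldots,I_{n-1}$, plus an outer contribution from $[\la_0,\la_{-2}]$. On each piece, replace $\om$ by its pointwise upper or lower bound from the control conditions \eqref{con1}, then use the relay conditions \eqref{c1} and \eqref{c2} to show that the remote contributions are dominated by the main one, with a geometric factor coming from $\la_{n-1}/\la_{n-2}\to 0$ guaranteed by \eqref{c_1}.

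Fix $n\ge 1$ and $x\in I_n$ with $x\ge A(t)$. Write
$$
Q(x,t)\;=\;\int_x^{\la_{n-1}}\tfrac{\om}{y}\,dy\;+\;\sum_{k=1}^{n-1}\int_{I_k}\tfrac{\om}{y}\,dy\;+\;\int_{\la_0}^{\la_{-2}}\tfrac{\om}{y}\,dy,
$$
with the middle sum empty if $n=1$. For the upper bound, use $\om(y,t)<\psi_k y^{-q_k}$ on $I_k$, giving the main estimate $(\psi_n/q_n)(x^{-q_n}-\la_{n-1}^{-q_n})$ and, on each $I_k$, the bound $(\psi_k/q_k)(\la_k^{-q_k}-\la_{k-1}^{-q_k})\le(\psi_k/q_k)\la_k^{-q_k}$. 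The contribution from $I_{n-1}$ is handled by the relay condition \eqref{c1}, which in the form $\tfrac{\psi_{n-1}}{q_{n-1}}\la_{n-1}^{-q_{n-1}}\le \tfrac{\psi_n}{q_n}\la_{n-1}^{-q_n}$ exactly cancels the overshoot $-\tfrac{\psi_n}{q_n}\la_{n-1}^{-q_n}$ from the main term. For $k\le n-2$, iterating \eqref{c1} to compare $\tfrac{\psi_k}{q_k}\la_k^{-q_k}$ with $\tfrac{\psi_n}{q_n}\la_{n-1}^{-q_n}$ produces a geometric factor of the form $F\cdot(\la_{n-1}/\la_{n-2})^{q_1}$, with the sup defining $F$ tailored to control the worst admissible ratio between the $I_{n-2}$ and $I_n$ scalings. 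Finally, on $[\la_0,\la_{-2}]$ the bound $\om<\psi_1\la_0^{-q_1}$ gives a contribution $\psi_1\la_0^{-q_1}\log(\la_{-2}/\la_0)$, which yields the second summand in $\mu_n$ after dividing through by $(\psi_n/q_n)\la_{n-1}^{-q_n}\le(\psi_n/q_n)x^{-q_n}$.

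The lower bound is symmetric, using $\om>\phi_k y^{-p_k}$ and the relay condition \eqref{c2} in the form $\tfrac{\phi_{n-1}}{p_{n-1}}\la_{n-1}^{-p_{n-1}}\ge\tfrac{\phi_n}{p_n}\la_{n-1}^{-p_n}$. The main integral $(\phi_n/p_n)(x^{-p_n}-\la_{n-1}^{-p_n})$ has a negative overshoot that is compensated by the contribution from $I_{n-1}$, and the further intervals $I_{n-2},\ldots,I_1$ only strengthen the bound; the small negative error $(1-m_n)(\phi_n/p_n)x^{-p_n}$ accounts for the imperfect cancellation at each relay step. The base case $n=1$ is handled separately: the middle sum is empty, and only the outer contribution needs to be estimated, which gives $\mu_1=\log(\la_{-2}/\la_0)$ as stated.

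The main obstacle is the bookkeeping showing that after each relay-cancellation, what remains sums geometrically rather than diverging. Concretely, one must verify that the iterated use of \eqref{c1} in comparing $(\psi_k/q_k)\la_k^{-q_k}$ to $(\psi_n/q_n)\la_{n-1}^{-q_n}$ always produces ratios bounded uniformly by $F$, and that the residual factors $(\la_{n-1}/\la_k)^{q_k}$ telescope into a single geometric factor of size $(\la_{n-1}/\la_{n-2})^{q_1}$ (using $q_k\to 1/2$ and \eqref{c_0} so the differences $\la_{n-1}^{q_n-q_{n-1}}$ stay bounded). This is delicate because the exponents $q_k,p_k$ are themselves varying with $k$, so each comparison picks up a small correction that must be absorbed into $F$ rather than the $\mu_n$ remainder.
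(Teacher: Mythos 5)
Your proposal is correct and follows essentially the same route as the paper. You identify the same three-part decomposition of $Q$ (main interval $I_n$, remote intervals $I_1,\ldots,I_{n-1}$, outer region $[\la_0,\la_{-2}]$), use the relay conditions \eqref{c1}, \eqref{c2} in exactly the same way to cancel the overshoot term $\frac{\psi_n}{q_n}\la_{n-1}^{-q_n}$ against the $I_{n-1}$ contribution and to telescope the remaining remote contributions down to a single residual comparable to $\frac{\psi_{n-2}}{q_{n-2}}\la_{n-2}^{-q_{n-2}}$, and absorb that residual into $F\cdot(\la_{n-1}/\la_{n-2})^{q_1}$ using $\la_n/\la_{n-1}\to 0$; the outer piece gives the $\log(\la_{-2}/\la_0)$ term. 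The only cosmetic difference is that where you speak of ``dividing through by $(\psi_n/q_n)x^{-q_n}$'', the paper phrases this as a convex-combination observation with $\ka=x^{q_n}\la_{n-1}^{-q_n}\in[0,1]$, but both give the same $\mu_n$.
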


\begin{proof}We start with the upper bound.
Set $f_{n}(y)=\sum_{j=1}^n\ch_{I_j}(y)\psi_j y^{-q_j}$. As a preparation, we will estimate
\begin{align}
\int_{\la_{n-1}}^{\la_0}\frac{f_{n-1}(y)}{y}~dy.
\end{align}
for $n\geq 2$. Setting $y=\la_{n-1}z$,
\begin{align*}
\int_{\la_{n-1}}^{\la_0}\frac{f_{n-1}(y)}{y}~dy
&=\int_1^{\la_0/\la_{n-1}}f_{n-1}(\la_{n-1}z)z^{-1}~dz\\
&=\int_1^{\la_0/\la_{n-1}}\sum_{j=1}^{n-1}\ch_{[\frac{\la_j}{\la_{n-1}},\frac{\la_{j-1}}{\la_{n-1}}]}(z)\psi_jz^{-q_{j}-1}\la_{n-1}^{-q_j}~dz\\
&\le\int_1^{\infty}\psi_{n-1} z^{-1-q_{n-1}}\la_{n-1}^{-q_{n-1}}~dz+\int_{\la_{n-2}/\la_{n-1}}^{\la_0/\la_{n-1}}\sum_{j=1}^{n-2}\ch_{[\frac{\la_j}{\la_{n-1}},\frac{\la_{j-1}}{\la_{n-1}}]}(z)\psi_jz^{-1-q_{j}}\la_{n-1}^{-q_j}~dz\\
&\le \frac{\psi_{n-1}}{q_{n-1}} \la_{n-1}^{-q_{n-1}}+\int_{\la_{n-2}/\la_{n-1}}^{\la_0/\la_{n-1}}\sum_{j=1}^{n-2}\ch_{[\frac{\la_j}{\la_{n-1}},\frac{\la_{j-1}}{\la_{n-1}}]}(z)\psi_jz^{-1-q_{j}}\la_{n-1}^{-q_j}~dz
\end{align*}
For the integral in the last line we have the following estimate.
\begin{align*}
\int_{\la_{n-2}/\la_{n-1}}^{\la_0/\la_{n-1}}\sum_{j=1}^{n-2}\ch_{[\frac{\la_j}{\la_{n-1}},\frac{\la_{j-1}}{\la_{n-1}}]}(z)\psi_jz^{-1-q_{j}}\la_{n-1}^{-q_j}~dz&=\sum_{j=1}^{n-2}\la_{n-1}^{-q_j}\int_{\la_{n-2}/\la_{n-1}}^{\la_0/\la_{n-1}}\ch_{[\frac{\la_j}{\la_{n-1}},\frac{\la_{j-1}}{\la_{n-1}}]}(z)\psi_jz^{-1-q_{j}}~dz
\\
&=\sum_{j=1}^{n-2}\frac{\psi_j}{q_j}(\la_j^{-q_{j}}-\la_{j-1}^{-q_j})
\\
&\le \frac{\psi_{n-2}}{q_{n-2}}\la_{n-2}^{-q_{n-2}}-\frac{\psi_{1}}{q_{1}}\la_{0}^{-q_1}
\end{align*}
In the last step we have used the condition \eqref{c1}.
Hence
\begin{align}
\frac{q_n}{\psi_n}\la_{n-1}^{q_n}\int_{\la_{n-1}}^{\la_0} \frac{f_{n-1}}{y}~d y &\le \frac{q_n}{\psi_n}\la_{n-1}^{q_n} \left[ \frac{\psi_{n-1}}{q_{n-1}} \la_{n-1}^{-q_{n-1}}+ \frac{\psi_{n-2}}{q_{n-2}}\la_{n-2}^{-q_{n-2}} \right]\\
&\le 1+ \frac{q_n \psi_{n-2}}{\psi_n q_{n-2}} \la_{n-1}^{q_n} \la_{n-2}^{-q_{n-2}}\\
&= 1+ \frac{q_n \psi_{n-2}}{\psi_n q_{n-2}} \la_{n-1}^{q_n} \la_{n-1}^{-q_{n-1}} \la_{n-1}^{q_{n-1}} \la_{n-1}^{-q_{n-2}} \la_{n-1}^{q_{n-2}} \la_{n-2}^{-q_{n-2}}\\
&\le 1+ F \left(\frac{\la_{n-1}}{ \la_{n-2}}\right)^{1/2}\\ 
&\le 1 + F \left(\frac{\la_{n-1}}{ \la_{n-2}}\right)^{q_1}
\end{align}
using \eqref{c1} again and $C$ is a universal constant. 
From the control conditions \eqref{con1}, we obtain for $x\in I_n, x\geq A(t)$ 
\begin{align}
Q(x, t) &\le \frac{\psi_n}{q_n}(x^{-q_n} - \la_{n-1}^{-q_n}) + \int_{\la_{n-1}}^{\la_0} \frac{f_{n-1}(y)}{y}~dy + \psi_1 \la_0^{-q_1} \log\frac{\la_{-2}}{\la_0}\\
 &\le \frac{\psi_n}{q_n}(x^{-q_n} - \la_{n-1}^{-q_n}) + \frac{\psi_n}{q_n}\la_{n-1}^{-q_n}\left(1+F\la_{n-1}^{q_1}\la_{n-2}^{-q_1}\right) + \psi_1 \la_0^{-q_1} \log\frac{\la_{-2}}{\la_0}\\
 &= \frac{\psi_n}{q_n}x^{-q_n}\left[1 - x^{q_n}\la_{n-1}^{-q_n} + x^{q_n}\la_{n-1}^{-q_n}\left(1+F\la_{n-1}^{q_1}\la_{n-2}^{-q_1}\right) +  \frac{q_n}{\psi_n} x^{q_n}\psi_1 \la_0^{-q_1} \log\frac{\la_{-2}}{\la_0}\right]
\end{align}
Writing $\ka = x^{q_n}\la_{n-1}^{-q_n}$ and noting that $\ka \in [0, 1]$, we see that
the square bracket is convex combination of $1$ and the number
\begin{align}
\left(1+F\la_{n-1}^{q_1}\la_{n-2}^{-q_1}\right) +  \frac{q_n}{\psi_n}\la_{n-1}^{q_n}\psi_1 \la_0^{-q_1} \log\frac{\la_{-2}}{\la_0}
\end{align}
which is less than
\begin{align}
\left(1+F\la_{n-1}^{q_1}\la_{n-2}^{-q_1}\right) + F\log\left(\frac{\la_{-2}}{\la_0}\right)\la_{0}^{-q_1} \la_{n-1}^{q_n}.
\end{align} This yields the desired upper bound for $Q$ in the case $n\geq 2$.
The computation for $n = 1$ is very similar. 

Now we turn to the lower bound. Set $g_{n}(y)=\sum_{j=1}^n\ch_{I_j}(y)\phi_j y^{-p _j}$.  Then setting $y=\la_{n-1}z$
\begin{align*}
\int_{\la_{n-1}}^{\la_0}\frac{g_{n-1}(y)}{y}~dy
&=\int_1^{\la_0/\la_{n-1}}g_{n-1}(\la_{n-1}z)z^{-1}~dz\\
&=\int_1^{\la_0/\la_{n-1}}\sum_{j=1}^{n-1}\ch_{[\frac{\la_j}{\la_{n-1}},\frac{\la_{j-1}}{\la_{n-1}}]}(z)\phi_jz^{-p_{j}-1}\la_{n-1}^{-p_j}~dz\\
&\ge\int_1^{\la_{n-2}/\la_{n-1}}\phi_{n-1}z^{-p_{n-1}- 1}\la_{n-1}^{-p_{n-1}}~dz\\
&=\frac{\phi_{n-1}}{p_{n-1}}\la_{n-1}^{-p_{n-1}}\left(1-\la_{n-2}^{-p_{n-1}}\la_{n-1}^{-p_{n-1}}\right)\\
\end{align*}

For $x\in I_n, x\geq A(t)$ and $n\geq 2$,
\begin{align}
Q(x, t) &\ge \frac{\phi_n}{p_n}(x^{-p_n} - \la_{n-1}^{-p_n}) + \frac{\phi_{n-1}}{p_{n-1}} (\la_{n-1}^{-p_{n-1}} - \la_{n-2}^{-p_{n-1}})\\
&\ge \frac{\phi_n}{p_n}x^{-p_n} \left[1 - x^{p_n} \la_{n-1}^{-p_{n}} + \frac{\phi_{n-1}p_n}{\phi_{n}p_{n-1}} x^{p_n} (\la_{n-1}^{-p_{n-1}} - \la_{n-2}^{p_{n-1}})
\right]
\end{align}
Writing again $\ka = x^{q_n}\la_{n-1}^{-q_n}$ and noting that $\ka \in [0, 1]$, we see that
the square bracket is convex combination of $1$ and the number
\begin{align}
\frac{\phi_{n-1}p_n}{\phi_{n}p_{n-1}} \la_{n-1}^{p_n} (\la_{n-1}^{-p_{n-1}} - \la_{n-2}^{p_{n-1}})
\end{align}
which has the lower bound
\begin{align*}
& 1 - \frac{\phi_{n-1}p_n}{\phi_{n}p_{n-1}} \la_{n-1}^{p_n}\la_{n-2}^{-p_{n-1}}\\
&\ge 1 - \frac{\phi_{n-1}p_n}{\phi_{n}p_{n-1}} \la_{n-1}^{p_n}\la_{n-2}^{-p_{n-1}} \\
&\ge 1 - \frac{\phi_{n-1}p_n}{\phi_{n}p_{n-1}} \la_{n-1}^{p_n} \la_{n-1}^{-p_{n-1}} \la_{n-1}^{p_{n-1}}\la_{n-2}^{-p_{n-1}}\\
&\ge 1 - F\left(\frac{\la_{n-1}}{\la_{n-2}}\right)^{q_1}.
\end{align*}
This implies the desired lower bound on $Q$ for $n\geq 2$. 
\end{proof}

\begin{lemma}\label{choose_seq}
Let $0 < \la_{-2} < 1$ and numbers $\phi_1, \psi_1$ with
$\phi_1 \psi_1 = 1$ be given. The numbers $\la_0, \la_{-1}$ and the
the sequences $p_n, q_n, \phi_n, \psi_n, \la_n$ can be chosen in such a way such that \eqref{condition1} to \eqref{trapping} hold and such that moreover 
\begin{align*}
\lim_{n\to \infty} \phi_n > 0.
\end{align*}
As a consequence, the set of initial suitably prepared initial data is not empty for
all $0 < A_0 < \la_0$.
\end{lemma}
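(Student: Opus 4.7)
The strategy is to make an explicit choice of all sequences and verify the list of conditions in order. I would set $p_n = \tfrac12 - a_n$ and $q_n = \tfrac12 + a_n$ with $a_n = c\, 2^{-n}$ for a small constant $c \in (0,\tfrac14)$ to be adjusted below, and $\la_n = \la_0 r^{n^2}$ for $n\ge 1$ with $r \in (0,1)$ a second free parameter; $\la_{-1}$ may be chosen arbitrarily in $(\la_0,\la_{-2})$. Conditions \eqref{condition1}, \eqref{c_1} and \eqref{c_0} follow immediately, since $\la_n/\la_{n-1} = r^{2n-1} \to 0$ and $|\log \la_n|\cdot|q_n-q_{n-1}| = O(n^2\, 2^{-n}) \to 0$. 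I then define $\phi_n,\psi_n$ by forcing equality in the relay conditions \eqref{relay1} and \eqref{relay2}. Because $(p_n-p_{n-1})+(q_n-q_{n-1})=0$, the product $\phi_n\psi_n$ is preserved and remains $\phi_1\psi_1 = 1$, giving \eqref{c__1}. Since the increments satisfy $|\log(\phi_n/\phi_{n-1})| = (a_{n-1}-a_n)|\log \la_{n-1}| \lesssim n^2 2^{-n}$, the telescoping sum converges absolutely and $\phi_n \searrow \phi_\infty > 0$, which is the main claim of the lemma. Under equality, \eqref{c1} and \eqref{c2} collapse to $q_n/q_{n-1} \le 1$ and $p_n/p_{n-1} \ge 1$, both trivial. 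Condition \eqref{relay_start} reads $\phi_1^2 < \la_0^{-2a_1}$ and is arranged by choosing $\la_0$ sufficiently small.

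\textbf{The main obstacle} is verifying the trapping condition \eqref{trapping}. Writing $p_n/(m_n(1-p_n)) < 1$ as $m_n > p_n/(1-p_n) = 1 - \tfrac{4a_n}{1+2a_n}$ and recalling $m_n = 1 - \mu_n$ from Lemma \ref{lem_control_Q}, the inequality becomes $\mu_n < \tfrac{4a_n}{1+2a_n}$, so to leading order I need $\mu_n = o(a_n)$. The second term in $\mu_n$ contains a factor $\la_{n-1}^{q_n}$ that decays super-geometrically and is completely negligible. For the first term $CF(\la_{n-1}/\la_{n-2})^{q_1}$ I would first bound $F$ uniformly in $n$. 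Under the equality forms of \eqref{relay1}--\eqref{relay2}, the combination $\psi_{n-2}/\psi_n \cdot \la_{n-1}^{q_n-q_{n-1}} \la_{n-1}^{q_{n-1}-q_{n-2}}$ appearing in $F$ collapses algebraically to $(\la_{n-1}/\la_{n-2})^{q_{n-1}-q_{n-2}}$, whose logarithm $(a_{n-2}-a_{n-1})|\log(\la_{n-1}/\la_{n-2})|$ tends to $0$; the $\phi$-factor is handled identically, and $q_n\psi_1/\psi_n \le q_1\psi_1 / \phi_\infty$ is bounded because $\psi_n$ converges. With $F < \infty$, the first term in $\mu_n$ is bounded by $CF\, r^{q_1(2n-3)}$, which decays geometrically at rate $r^{2q_1}$. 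Since $q_1 > \tfrac12$, any $r$ small enough that $r^{2q_1} < \tfrac12$ (for example $r<\tfrac14$) gives $\mu_n = O(r^{q_1(2n-3)}) = o(2^{-n}) = o(a_n)$ for all large $n$, while the finitely many remaining small-$n$ indices are forced into range by further shrinking $r$ and the ratio $\la_{-2}/\la_0$.

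\textbf{Non-emptiness} of the class of suitably prepared data follows once the conditions hold. The barriers $\phi_n x^{-p_n}$ and $\psi_n x^{-q_n}$ never cross on $I_n$: at $x = \la_{n-1}$ the non-crossing inequality reads $\phi_n^2 < \la_{n-1}^{-2a_n}$, and multiplying \eqref{relay1} by \eqref{relay2} in equality form shows $(\phi_n/\psi_n)\la_{n-1}^{2a_n} = (\phi_{n-1}/\psi_{n-1})\la_{n-1}^{2a_{n-1}}$, so the crossing inequality at level $n$ is strictly weaker than the one at level $n-1$ and is therefore implied by \eqref{relay_start}. Given any $A_0 \in (0,\la_0)$, one constructs a smooth $\om_0$ threading strictly between the two barrier families on each $I_n \cap [A_0, \la_0]$, extends it continuously across $[\la_0, \la_{-2}+\de]$ so that the middle two conditions of \eqref{con2} hold, and cuts off smoothly to zero on $(0,\eps)$ and past $\la_{-2}+\de$; $\rho_0$ is taken identically $1$ on $[A_0, \la_{-2}]$ and smoothly cut off outside, completing the construction.
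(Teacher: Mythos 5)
Your proof follows essentially the same route as the paper: an ansatz $p_n=\tfrac12-\eps_n$, $q_n=\tfrac12+\eps_n$ with geometrically decaying $\eps_n$, super-geometric $\la_n\sim\la_0 r^{n^2}$ (the paper writes $r=e^{-L}$ and $\eps_n=\eps_1 e^{-(n-1)}$), $\phi_n,\psi_n$ defined by forcing equality in the relay conditions so that \eqref{c__1}, \eqref{c1}--\eqref{c2} collapse to trivialities, convergence of $\phi_n$ from absolute summability of $|\log\la_{n-1}|(\eps_{n-1}-\eps_n)$, and the trapping condition reduced to $\mu_n\lesssim\eps_n$ and verified by tuning the super-geometric decay rate. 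Your explicit algebraic collapse of the first term of $F$ to $(\la_{n-1}/\la_{n-2})^{q_{n-1}-q_{n-2}}$ is a nice clean way to see its finiteness and is slightly more explicit than the paper's remark.

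One small caution in the trapping step: you say ``with $F<\infty$, \ldots, while the finitely many remaining small-$n$ indices are forced into range by further shrinking $r$.'' But $F$ depends on $r$ and grows as $r\to0$ (the exponent $(a_{n-2}-a_{n-1})\,|\log(\la_{n-1}/\la_{n-2})|\sim c\,2^{-n}(2n-3)\log(1/r)$ is proportional to $\log(1/r)$), so shrinking $r$ does not help for free; one has to check that $F(r)\lesssim r^{-Cc}$ grows strictly slower than the accompanying factor $r^{q_1(2n-3)}$ decays. Since $q_1>\tfrac12$ and $c$ can be taken small, this is true, but the sentence as written has a circular flavor. (The paper glosses over the same point by asserting that $F$ is bounded ``uniformly in $L\ge 1$,'' which is not literally true but harmless for the same reason.) Adding one line bounding $F(r)$ explicitly would close the loop; otherwise the argument is sound and matches the paper's.
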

\begin{proof}
Recall that in Lemma \ref{lem_control_Q}, $m_n$ and $M_n$ were defined and were written as $m_n = 1 - \mu_n, M_n = 1+\mu_n$. We make the following ansatz ($n\geq 1$):
\begin{align}\label{ansatz}
q_n = \frac{1}{2} + \eps_n, ~p_n = \frac{1}{2} - \eps_n
\end{align}
with a positive, montonotically decreasing sequence $\eps_n$ such that 
$\eps_n \to 0$ as $n\to \infty$. 

First we pick a small $\eps_1 > 0$ and set $\eps_n = \eps_1 e^{-(n-1)}$ and $\la_n = \la_0 e^{-L n^2}$ for $n\geq 1$ and $L > 0$ to be fixed later. $\la_0$ and $\la_{-1}$ will also be fixed later.
\eqref{c_1} and \eqref{c_0} are satisfied.
For $n \geq 2$, we define the sequences $\phi_n, \psi_n$ by
\begin{align}\label{def_seq_phi}
\phi_n = \phi_1 \prod_{j=2}^{n} \la_{j-1}^{\eps_{j-1}-\eps_j}, ~~~~~
\psi_n = \psi_1 \prod_{j=2}^{n} \la_{j-1}^{-\eps_{j-1}+\eps_j}
\end{align}
In this way, \eqref{c__1} and \eqref{relay1} as well as \eqref{relay2} hold. Inserting \eqref{def_seq_phi} into \eqref{c1} reduces \eqref{c1} to $q_n \leq q_{n-1}$, which holds since $q_n$ is monotone decreasing. \eqref{c2} is verified in the same way. 
We note that as a consequence of our choice of $\eps_n, \lambda_n$, $\lim_{n\to \infty} \phi_n$ is positive, since 
\begin{align*}
\sum_{j\geq 2} |\log \la_{j-1}||\eps_{j-1}-\eps_j| < \infty
\end{align*}
for each $L > 0$.

After using \eqref{ansatz} and $m_n = 1 - \mu_n, M_n = 1+\mu_n$, we see that a sufficient condition for the two inequalities in \eqref{trapping} is 
\begin{align}\label{cond_trap_1}
\mu_n + 2 \mu_n \eps_n < 4 \eps_n.
\end{align}
In case $n=1$, $\mu_1 = \log(\la_{-2}/\la_{0})$ and \eqref{cond_trap_1} can be satisfied by choosing $\la_0$ to be sufficiently close to $\la_{-2}$. Observe now that $F$ from Lemma \ref{lem_control_Q} can be bounded in terms of the parameters $\eps_1, \la_0$, uniformly in the parameter $L \geq 1$. From the definition of $\mu_n$ for $n\geq 2$, it follows
that
\begin{align*}
\mu_n \le C F e^{-L(4n - 3)} + C \log(\la_{-2}/\la_{0}) \la_0^{-\eps_1} e^{-\frac{1}{2} L n^2}
\end{align*}
so that \eqref{cond_trap_1} holds if $L$ is chosen sufficiently large (dependent on $\eps_1$). 
It is obvious that the set of suitably prepared initial data is nonempty for all $0 < A_0 < \la_0$, since we have $\phi_n x^{-p_n} < \psi_n x^{-q_n}$ for all $x\in I_n$, $n\geq 1$.
\end{proof}

Again, the following trapping Lemma plays a key role.
\begin{lemma}[Trapping lemma]\label{lem_trap}
Assume that $\om$ is controlled on $[0, T)$ for some positive $T> 0$, $T < T_s$. Let $n\geq 1$ and consider particle trajectories $X(t)$ such that $ A(t)\leq X(t)\leq \lambda_{n-1}$.  Suppose $t^* \leq t$ is some time such that $X(t^*) \leq \lambda_{n-1}$ and such that
\begin{align}
\phi_n X(t^*)^{-p_n} < \om(X(t^*), t^*) < \psi_n X(t^*)^{-q_n}
\end{align}
holds. Assume that \eqref{trapping} holds and that $\rho(X(t), t)=1$. Then 
\begin{align}
\phi_n X(T)^{-p} < \om(X(T), T) < \psi_n X(T)^{-q}
\end{align}
\end{lemma}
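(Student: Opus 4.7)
I would follow the template of Lemma \ref{lem_trap1}, replacing the crude $Q$-bounds of Lemma \ref{controlQ1} by the refined scale-$n$ bounds of Lemma \ref{lem_control_Q} and using the $n$-dependent parameters. Because $\rho(X(s), s) = 1$ is given, integrating the first equation of \eqref{model1} (with $\beta = 1$) along $X(\cdot)$ yields
\begin{align*}
\om(X(T), T) = \om(X(t^*), t^*) + \int_{t^*}^{T}\frac{ds}{X(s)}.
\end{align*}

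Next I would convert the time integral into a spatial integral via $-\dot X(s) = X(s)\, Q(X(s), s)$, so that $X(s)^{-1}\, ds = (-\dot X(s))\, X(s)^{-2}\, Q(X(s),s)^{-1}\, ds$; substituting $u = X(s)$ is valid because particles move to the left, giving $X(T) \le u \le X(t^*) \le \la_{n-1}$. Inserting the two-sided bound of Lemma \ref{lem_control_Q} at scale $n$ and using $p_n + q_n = 1$, elementary integration produces
\begin{align*}
\frac{q_n}{\psi_n M_n p_n}\bigl[X(T)^{-p_n} - X(t^*)^{-p_n}\bigr] \le \int_{t^*}^{T}\frac{ds}{X(s)} \le \frac{p_n}{\phi_n m_n q_n}\bigl[X(T)^{-q_n} - X(t^*)^{-q_n}\bigr].
\end{align*}

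Combining the right-hand inequality with $\om(X(t^*), t^*) < \psi_n X(t^*)^{-q_n}$ and factoring out $X(T)^{-q_n}$ expresses the upper bound on $\om(X(T), T)\, X(T)^{q_n}$ as a convex combination of $\psi_n$ and $\frac{p_n}{\phi_n m_n q_n}$, with weight $\eta = X(T)^{q_n} X(t^*)^{-q_n} \in [0,1]$. Using $\phi_n \psi_n = 1$ from \eqref{c__1}, the first part of \eqref{trapping} translates to $\frac{p_n}{\phi_n m_n q_n} < \psi_n$, so the convex combination stays below $\psi_n$ and the upper trap is preserved. The lower trap is symmetric: combining the left-hand integral inequality with $\om(X(t^*), t^*) > \phi_n X(t^*)^{-p_n}$ yields a convex combination of $\phi_n$ and $\frac{q_n}{\psi_n M_n p_n}$, and the second part of \eqref{trapping} together with \eqref{c__1} forces this combination above $\phi_n$.

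The one delicate point is making the scale-$n$ bound of Lemma \ref{lem_control_Q} available at every $s \in [t^*, T]$: particle monotonicity automatically gives $X(s) \le \la_{n-1}$, but $X(s)$ may descend into smaller intervals $I_m$ with $m > n$. The relay conditions \eqref{relay1}--\eqref{relay2} ensure compatibility of the $\om$-controls (and thereby of the derived $Q$-bounds) across consecutive scales, so the scale-$n$ estimate remains applicable on the full trajectory and the convex-combination argument proceeds uniformly in $s$. This is the only bookkeeping obstacle; the mechanism is otherwise identical to Lemma \ref{lem_trap1}, now sharpened by the tuning of $p_n, q_n, \phi_n, \psi_n, m_n, M_n$ to deliver the correct $x^{-1/2}$ exponent in the limit $n \to \infty$.
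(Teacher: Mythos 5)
Your proof is correct and it fills in exactly the argument the paper leaves implicit (the paper only states ``very similar to the proof of Lemma~\ref{lem_trap1}''): integrate $\om_t + u\om_x = \rho/x$ with $\rho \equiv 1$ along the trajectory, change variables via $-\dot X = XQ$, insert the two-sided $Q$-bound from Lemma~\ref{lem_control_Q}, use $p_n + q_n = 1$ to carry out the resulting power integrals, and then the convex-combination argument with $\phi_n\psi_n = 1$ and \eqref{trapping} closes both traps. Your translation of \eqref{trapping} into $\frac{p_n}{\phi_n m_n q_n} < \psi_n$ and $\frac{q_n}{\psi_n M_n p_n} > \phi_n$ is exactly right.

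One small over-complication in your last paragraph: the ``delicate point'' about $X(s)$ descending into $I_m$ for $m > n$ does not actually arise. In the application (see the proof of the theorem that follows), $t^*$ is chosen so that $X(t^*) = \la_{n-1}$ (or $t^* = 0$ with $X(0) \in I_n$), and the conclusion is used for $X(T_c) \in I_n$; by monotonicity of the trajectory, $\la_n \le X(T_c) \le X(s) \le \la_{n-1}$ for all $s \in [t^*, T_c]$, so the trajectory never leaves $I_n$ on that interval and the scale-$n$ bound from Lemma~\ref{lem_control_Q} applies uniformly without further argument. Moreover, the justification you offer --- that \eqref{relay1}--\eqref{relay2} ``ensure compatibility of the derived $Q$-bounds across consecutive scales'' --- is not quite what those conditions deliver: they ensure the vorticity barriers nest at the junctions $x = \la_{n-1}$, not that the scale-$n$ estimate on $Q$ persists at finer scales. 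Since the trajectory stays in $I_n$ anyway, this issue is moot, but as written that sentence is not a valid step.
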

The proof is very similar to the proof of Lemma \ref{lem_trap1} and uses \eqref{trapping}.

\begin{theorem}
Let the numbers $0 < \la_{-2} < 1, \phi_1, \psi_1$ with $\phi_1\psi_1 = 1$ be given and let the sequences $\phi_n, \psi_n, \la_n$ be chosen as in Lemma \ref{choose_seq}.
Suppose $A_0 > 0$ is sufficiently small. Then any smooth solution with suitably prepared initial data stays controlled on the time interval $[0, T_s)$ and $T_s < \infty$. 
\end{theorem}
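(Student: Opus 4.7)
The plan is to run a bootstrap/continuity argument modeled on the proof of Theorem \ref{blowupSmooth}, but now simultaneously across all scales $I_n$. By continuity of the smooth solution and the strict inequalities built into Definition \ref{def_suitablyPrep2}, control in the sense of \eqref{con1} holds on some initial interval $[0,\tau]$. Let $T_c\in(0,T_s]$ be the supremum of such intervals; if $T_c<T_s$ then continuity forces some strict inequality in \eqref{con1} to become an equality at $T_c$ along some particle trajectory $X(t)$, and the whole task is to derive a contradiction.

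A preliminary observation I would record first: since $\rho_0\equiv 1$ on $[A_0,\la_{-2}]$, since $\rho$ is transported, and since $\om\ge 0$ forces $u\le 0$ so that every particle drifts leftward, one gets $\rho(X(t),t)\equiv 1$ along every trajectory with $X(0)\in[A_0,\la_{-2}]$. This supplies the hypothesis $\rho\equiv 1$ required by the trapping Lemma \ref{lem_trap}.

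The case analysis then splits on where $X(T_c)$ lies. If $X(T_c)\in[\la_0,\la_{-2}]$, I would recycle the outer arguments of Theorem \ref{blowupSmooth}: lower bounds on $\om$ are preserved because $\rho/x\ge 0$, while the upper bound uses $\rho\le 1$ and integration of $\om_t+u\om_x=\rho/x$ for time at most $T^*$, where $T^*$ is the scale-$1$ a priori lifetime bound extracted from Lemma \ref{lem_control_Q} exactly as in Lemma \ref{upperboundTime}; smallness of $A_0$ buys smallness of $T^*$ relative to $\de$ and closes the gap. If $X(T_c)\in I_n$ for some $n\ge 1$, I would pick a relay time $t^*\le T_c$ defined as $t^*=0$ when $X(0)\in I_n$ and otherwise as the first moment $X$ crosses $\la_{n-1}$ into $I_n$ from $I_{n-1}$ (or from $[\la_0,\la_{-1}]$ when $n=1$). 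The relay conditions \eqref{relay1}--\eqref{relay2} convert the scale-$(n-1)$ bounds known at $t^*$ into strict scale-$n$ bounds at $\la_{n-1}$, and then Lemma \ref{lem_trap}, fed with $\rho\equiv 1$ and the trapping inequality \eqref{trapping}, propagates the barriers on $[t^*,T_c]$, contradicting the assumed equality.

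Once control is established on $[0,T_s)$, finite-time blowup is a direct consequence of Lemma \ref{lem_control_Q}: whenever $A(t)\in I_n$ one has
\[
-\dot A(t) \;=\; A(t)\,Q(A(t),t) \;\ge\; \frac{\phi_n m_n}{p_n}\,A(t)^{1-p_n}
\]
with exponent strictly less than $1$, and a scale-by-scale ODE comparison combined with the rapid geometric decay of $\la_n$ forces $A(t)\to 0$ in finite time; the continuation criterion of Theorem \ref{t3} then forces $T_s<\infty$. The main obstacle I foresee is not any single case but the risk that the iterated relay across infinitely many scales accumulates error. This is defused structurally by Lemma \ref{choose_seq}: the choice $\la_n=\la_0 e^{-Ln^2}$ with $L$ taken large enough (depending on $\eps_1$) makes the residuals $\mu_n$ of Lemma \ref{lem_control_Q} summable and keeps \eqref{trapping} satisfied with a uniform margin at every scale, so each invocation of Lemma \ref{lem_trap} is clean and independent of the previous relay step.
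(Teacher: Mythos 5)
Your argument is essentially the paper's own: a continuity/bootstrap definition of a maximal controlled time $T_c$, a contradiction at $T_c$ via the trapping Lemma~\ref{lem_trap} applied from the relay time $t^*$ (either $t^*=0$ or the first crossing of $\la_{n-1}$), outer-region bounds preserved by the same smallness-of-$A_0$ device as in Theorem~\ref{blowupSmooth}, and finite lifetime from the $Q$-lower bound of Lemma~\ref{lem_control_Q} integrated along $A(t)$. Your explicit preliminary observation that transport and leftward drift force $\rho(X(t),t)\equiv 1$ for $X(0)\in[A_0,\la_{-2}]$ is a detail the paper leaves implicit but is indeed exactly what licenses the hypothesis $\rho\equiv 1$ in Lemma~\ref{lem_trap}; spelling it out is a genuine (if small) improvement in rigor. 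The only cosmetic inaccuracy is that you do not actually need a scale-by-scale ODE comparison or the rapid decay of $\la_n$ for finite time: since $\phi_n\to\phi_\infty>0$, $m_n\to 1$, $p_n\nearrow 1/2$, one has $-\dot A \ge c_0 A^{1-p_1}$ uniformly for $A\le 1$, and a single integration already forces $A$ to reach zero in finite time.
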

\begin{proof}
As in the proof of Theorem \ref{blowupSmooth}, $A_0$ can be chosen a-priori so small that by the time the solution blows up, the second and third inequalities of \eqref{con1} still hold. We therefore focus on the persistence of the bounds 
\begin{align}\label{proof_eq_1}
\phi_n x^{-p_n} < \om(x,t) < \psi_nx^{-q_n}&\quad (x\in I_n~~\text{for some}~n\geq 1,~x\ge A(t)).
\end{align}
Since the initial data is suitably prepared in the sense of Definition \ref{def_suitablyPrep2}, there exists a small time interval $[0, \tau)$ on which the smooth solution is controlled. Let $0 < T_c \le T_s$ be the maximal time on which the solution is controlled. If $T_c < T_s$, then there is a particle trajectory such
that for some $n \geq 1$, $X(T_c)\in I_n$ and either 
\begin{align}\label{proof_eq_2}
\phi_n X(T_c)^{-p_n} = \om(X(T_c), T_c)~~\text{or}~~\psi_n X(T_c)^{-q_n} = \om(X(T_c), T_c).
\end{align}
If $X(0) \in I_n$, we let $t^* = 0$, otherwise we let $t^*$ be such that $X(t^*) = \la_{n-1}$ and apply the trapping Lemma \ref{lem_trap} and conclude that \eqref{proof_eq_2} does not occur. To show that $T_s < \infty$ works as in the proof of Lemma \ref{blowupSmooth}. 
\end{proof}

Now we can finish the proof of Theorem \ref{main_theorem_2}. Let all parameters be chosen as in the previous Theorem. We hence have 
\begin{align*}
\om(x, t) \geq \phi_n x^{-p_n} \geq \phi_n x^{-p_n+1/2} x^{-1/2}
\end{align*}
for all $x\in [A(t), \la_0]$, where $n$ is such that $x\in I_n$.
Suppose now that $T^*$ is such that $A(T^*) = 0$.
Note that $x^{-p_n+1/2} \geq \la_{n}^{-p_n+1/2}\to 1$ as $n\to \infty$. This follows from the explicit construction of the sequences $\la_n, p_n$ in Lemma \ref{choose_seq}. From the same Lemma we also know $\lim_{n\to \infty} \phi_n > 0$.
Hence we have the estimate
\begin{align*}
\om(x, t) \geq \phi x^{-1/2}
\end{align*}
with some suitable $\phi > 0$. The upper bound $\om(x, t)\leq \psi x^{-1/2}$ follows from similar considerations. 
%
\section{Acknowledgements}
%

We would like to thank Serguei Denissov, who in a discussion, made VH aware of the role an exact singular solution may play in the construction of smooth blowup solutions. Yao Yao suggested a generalization of the problem we originally considered. VH also expresses his gratitude to the German Research Foundation (DFG): many ideas contained in this work were developed when VH was supported by grants FOR 5156/1-1 and FOR 5156/1-2. Finally, VH would like to acknowledge support by NSF grant DMS-1614797.

\end{document}